\newcommand{\R}{\mathbb{R}}
\newcommand{\cN}{\mathcal{N}}
\def\Lamalp{\tilde\Lambda(m)}
\def\delt{\tilde{\delta}}
\def\deltt{\tilde{\delta}^{t}}
\newcommand{\cO}{\mathcal{O}}
\numberwithin{equation}{section}
\newtheorem{theorem}{Theorem}[section]
\newtheorem{lemma}{Lemma}[section]
\newtheorem{remark}{Remark}[section]
\newenvironment{proof}[1][Proof]{\begin{trivlist}
\item[\hskip \labelsep {\bfseries #1}]}{\end{trivlist}}
\newcommand{\qed}{\nobreak \ifvmode \relax \else
      \ifdim\lastskip<1.5em \hskip-\lastskip
     \hskip1.5em plus0em minus0.5em \fi \nobreak
      \vrule height0.75em width0.5em depth0.25em\fi}
\begin{document}
\NLA{1}{19}{00}{28}{09}
%\NLA{0}{0}{0}{0}{0}

\runningheads{Burak Aksoylu and Zuhal Yeter}
{Robust preconditioners for high-contrast biharmonic equation}

\title{Robust multigrid preconditioners for the high-contrast biharmonic
plate equation\footnotemark[2]}

\author{Burak Aksoylu\affil{1}\corrauth, Zuhal Yeter\affil{1}}
\address{\affilnum{1}\ Department of Mathematics \& 
Center for Computation and Technology,
Louisiana State University
}

\corraddr{Burak Aksoylu: Center for Computation and Technology,
Louisiana State University, 216 Johnston Hall, Baton Rouge LA, 70803 USA}
\footnotetext[2]{Email: {\tt burak@cct.lsu.edu}
%\href{http://www.interscience.wiley.com/jpages/1070-5325/}{\texttt{http://www.interscience.wiley.com/jpages/1070-5325/}}
}
%\cgsn{Publishing Arts Research Council}{98--1846389}

\received{2 October 2009}
%\revised{}
\noaccepted{}

%\vspace*{-1.0cm} {\small\tableofcontents}

\begin{abstract}
We study the high-contrast biharmonic plate equation with HCT and
Morley discretizations. We construct a preconditioner that is robust
with respect to contrast size and mesh size simultaneously based on
the preconditioner proposed by Aksoylu et al. (2008,
Comput. Vis. Sci. 11, pp. 319--331).  By extending the devised
singular perturbation analysis from linear finite element
discretization to the above discretizations, we prove and numerically
demonstrate the robustness of the preconditioner.  Therefore, we
accomplish a desirable preconditioning design goal by using the same
family of preconditioners to solve elliptic family of PDEs with
varying discretizations. We also present a strategy on how to
generalize the proposed preconditioner to cover high-contrast elliptic
PDEs of order $2k,~k>2$.  Moreover, we prove a fundamental qualitative
property of solution of the high-contrast biharmonic plate
equation. Namely, the solution over the highly-bending island becomes
a linear polynomial asymptotically.  The effectiveness of our
preconditioner is largely due to the integration of this qualitative
understanding of the underlying PDE into its construction.
\end{abstract}

\keywords{Biharmonic equation, plate equation, fourth order
 elliptic PDE, Schur complement, low-rank perturbation, singular
  perturbation analysis, high-contrast coefficients, discontinuous 
coefficients, heterogeneity.}
%\noindent
%{\sc\footnotesize AMS SUBJECT CLASSIFICATION: 65F10, 65N22, 65N55, 65F35, 15A12, 65N55}
%   I. 65F10 Iterative methods for linear systems
%  II. 65N22 Solution of discretized equations
% III. 65N55 Multigrid methods; domain decomposition
%  IV. 15A12 Conditioning of matrices
%   V. 65F35 Matrix norms, conditioning, scaling
%\begin{AMS}
%\end{AMS}

%%% ------------------------------------------------
\section{INTRODUCTION}
We study the construction of robust preconditioners for the
high-contrast biharmonic plate equation (also referred as the
biharmonic equation).  The aim is to achieve robustness with respect
to the contrast size and the mesh size simultaneously, which we call
as $m$- and $h$-robustness, respectively.  In the case of a
high-contrast diffusion equation, we studied the family of
preconditioners $B_{AGKS}$ by proving and numerically demonstrating
that the same family used for finite element discretization
\cite{AGKS:2007} can also be used for conservative finite volume
discretizations with minimal modification \cite{AkYe2009}. In this
article, we extend the applicability of $B_{AGKS}$ even further and
show that the very same preconditioner can be used for a wider family
of elliptic PDEs. The broadness of the applicability of $B_{AGKS}$ has
been achieved by singular perturbation analysis (SPA) as it provides
valuable insight into qualitative nature of the underlying PDE and its
discretizations.  In order to study the robustness of $B_{AGKS}$, we
use an SPA that is similar to the one devised on the matrix entries by
Aksoylu et al.~\cite{AGKS:2007}.  SPA turned out to be an effective
tool in analyzing certain behaviors of the discretization matrix
$K(m)$ such as the asymptotic rank, decoupling, low-rank perturbations
(LRP) of the resulting submatrices.  LRPs are exploited to accomplish
dramatic computational savings and this is the main numerical linear
algebra implication.

The devised SPA is utilized to explain the properties of the
submatrices related to $K(m)$.  In particular, SPA of highly-bending
block $K_{HH}(m)$, as modulus of bending $m \to \infty$, has important
implications for the behaviour of the Schur complement $S(m)$ of
$K_{HH}(m)$ in $K(m)$. Namely,
\begin{equation} \label{limitingS}
S(m) := K_{LL} - K_{LH} K_{HH}^{-1}(m) K_{HL} = S_\infty + \mathcal{O}(m^{-1}) \ ,
\end{equation}
where $S_\infty$ is a LRP of $K_{LL}$. The rank of the perturbation
depends on the number of disconnected components comprising the
highly-bending region.  This special limiting form of $S(m)$ allows
us to build a robust approximation of $S(m)^{-1}$ by merely using
solvers for $K_{LL}$ by the help of the
Sherman-Morrison-Woodbury formula.

%History of the problem:\\

Preconditioning for the biharmonic equation was extensively studied in
the domain decomposition setting
\cite{Mihajlovic.M;Silvester.D2004,Zhang.X1994} and multigrid, BPX,
and hierarchical basis settings
\cite{Braess.D;Peisker.P1987,Hanisch.M1993,Mihajlovic.M;Silvester.D2004a,Maes.J;Bultheel.A2006,Oswald.P1992,Oswald.P1995}.
Other solution strategies were also developed such as fast Poisson
solvers \cite{mayo1984,mayoGreenbaum1992} and iterative methods
\cite{dang2006}.  However, there is only limited preconditioning
literature available for discontinuous coefficients.  Marcinkowski
\cite{marcinkowski2007} studied domain decomposition preconditioners
for the mortar type discretization of the biharmonic equation with
large jumps in the coefficients. 

The high-contrast in material properties is ubiquitous in composite
materials.  Hence, the modeling of composite materials is an immediate
application of the biharmonic plate equation with high-contrast coefficients.
Since the usage of composite materials is steadily increasing, the
simulation and modeling of composite has become essential.  We witness
that the utilization of composites has become an industry standard.
For instance, light weight composite materials are now being used in
modern aircrafts by Airbus and Boeing.  There is imminent need for
robust preconditioning technology in the computational material
science community as the modeling and simulation capability of
composites evolve.

%Literature review:\\ 

In \cite{wang2005_masterThesis}, the
Euler-Bernoulli equation with discontinuous coefficients was studied
for the kinematics of composite beams.  In the beam setting, the
physical meaning of the PDE coefficient corresponds to the product of
Young's modulus and moment of inertia
\cite{pozrikidis2005_book}[p. 103], \cite{wang2005_masterThesis}.  In
the biharmonic plate equation setting, the PDE coefficient represents the plate
modulus of bending \cite{pozrikidis2005_book}[p. 406].
Nonhomogeneous elastic plates has been considered in 
\cite{manolisRangelovShaw2003} with varying modulus of elasticity.

Our model problem is limited to the biharmonic equation which captures
only the \emph{isotropic} materials. The extension of our analysis to
a more generalized 4-th order PDE is widely open.  Such PDEs have an
important role in structural mechanics as they are used in modeling
\emph{anisotropic} materials.  Plane deformations of anisotropic
materials were studied in \cite{millerHorgan1995}, but extension to
simultaneously heterogeneous and anisotropic case needs to be further
explored.  Grossi \cite{grossi2001} has studied the existence of the
weak solutions of anisotropic plates. The coercivity of the bilinear
forms has also been established which may lay the foundations for our
future work related to LRPs.

The remainder of the article is structured as follows. In
\S\ref{sec:underlyingPDE}, we present the underlying high-contrast
biharmonic plate equation and the associated bilinear forms. Subsequently,
the effects of high-contrast on the spectrum of stiffness matrix and
its subblocks are also discussed. Since the proposed preconditioner is
based on LRP, in \S\ref{sec:LRP}, we study the LRP of the limiting
Schur complement as in \eqref{limitingS}. In \S\ref{sec:SPA}, we
present the aforementioned SPA and reveal the asymptotic qualitative
nature of the solution. In particular, the solution over the
highly-bending region converges to a linear polynomial as $m
\rightarrow \infty$.  In \S\ref{sec:AGKS}, we introduce the proposed
preconditioner and prove its effectiveness by establishing a spectral bound for
the preconditioned system. In \S\ref{sec:generalPDE}, a strategy is presented
on how to generalize the proposed preconditioner to cover
high-contrast elliptic PDEs of order $2k,~k>2$. In
\S\ref{sec:numerics}, the $m$- and $h$-robustness of the
preconditioner are demonstrated by numerical experiments.

%%% ------------------------------------------------
\section{THE UNDERLYING PDE AND THE LINEAR SYSTEM} \label{sec:underlyingPDE}

\begin{figure}[htbp]
\centering{
\includegraphics[width=1in]{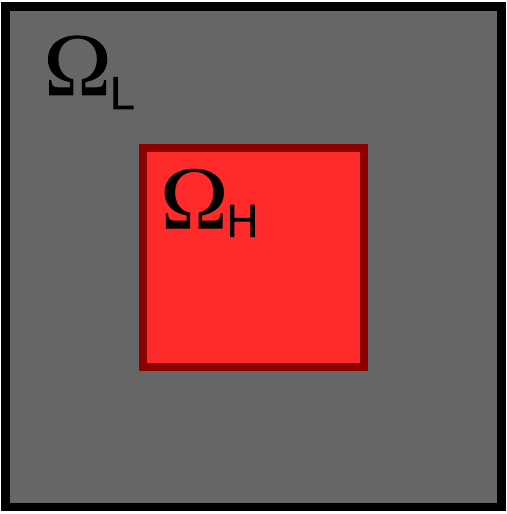}}
\caption{$\Omega = \overline{\Omega}_H \cup \Omega_L$ where $\Omega_H$
  and $\Omega_L$ are highly- and lowly-bending regions,
  respectively.\label{fig:domain1}}
\end{figure}

We study the following high-contrast biharmonic equation for the clamped
plate problem:
\begin{equation} \label{mainProblem}
\begin{array}{rcll}
\nabla^2 \, (\alpha \, \nabla^2 u) & = & 
f \quad & \text{in} \quad \Omega \subset \R^2, \\
u = \partial_n u & = & 0 \quad & \text{on} \quad \partial \Omega.
\end{array}
\end{equation}
We restrict the plate bending process to a \emph{binary regime} (see
Figure \ref{fig:domain1}) 
in which the coefficient $\alpha$ is a
piecewise constant function with the following values:
\begin{equation*}
\alpha(x) =
\begin{cases}
m \gg 1, & x \in \Omega_H, \\
1, & x \in \Omega_L.
\end{cases}
\end{equation*}
It is quite common to idealize the discontinuous PDE coefficient
$\alpha$ by a piecewise constant
function~\cite{BaKn1990,KnWi:2003}.  In the case of high-contrast
diffusion equation, Aksoylu and Beyer~\cite{AkBe2008} showed that the
idealization of diffusivity by piecewise constant coefficients is
meaningful by showing a continuous dependence of the solutions on the
diffusivity; also see~\cite{AkBe2009}.  A similar justification can be
extended to the high-contrast biharmonic plate equation.

%%% ------------------------------------------------
\subsection{Bilinear forms for the biharmonic equation}

In the theory of elasticity, potential energy is defined
by using \emph{rotationally invariant} functions.
For plates, the potential energy is given by~\cite[p. 30]{ciarlet2002_book}:
\begin{equation} \label{plateEnergyHessian}
J(v) := \frac{1}{2} \int_{\Omega} \alpha \, 
\left[ \{ \textrm{trace} \, Hess \}^2 + 
2(\sigma-1) \det Hess \right]~dx - \int_{\Omega} fv~dx,
\end{equation}
where $Hess$ is the Hessian, 
\begin{equation*}
Hess = \left[ \begin{matrix} 
\partial_{11}v & \partial_{12}v\\
\partial_{21}v & \partial_{22}v
\end{matrix}
\right].
\end{equation*}
The bilinear form corresponding to energy 
minimization in \eqref{plateEnergyHessian} is given by:
\begin{equation} \label{ciarlet1}
a(u,v) :=  \int_{\Omega} \alpha \, \left [\nabla^2 u \, \nabla^2 v + 
(1 - \sigma) \{ 2 \partial_{12}u  \, \partial_{12}v - 
\partial_{11} u  \, \partial_{22} v - 
\partial_{22} u  \, \partial_{11} v \} \right]~dx,
\end{equation}
where $0< \sigma < 1/2$ is the Poisson's ratio.  Note that the
straightforward bilinear form associated to \eqref{mainProblem} is
obtained by using Green's formula:
\begin{equation} \label{straightforwardBilinearForm}
\int_\Omega \nabla^2 \, (\alpha \, \nabla^2 u) \, v~dx = 
\int_\Omega  \alpha \, \nabla^2 u \, \nabla^2 v~dx +
\int_{\partial \Omega} \alpha \, \partial_n \nabla^2 u \, v~d\gamma -
\int_{\partial \Omega} \alpha \, \nabla^2 u \,  \partial_n v~d\gamma.
\end{equation}
We see that both \eqref{ciarlet1} and \eqref{straightforwardBilinearForm}
contain the so-called \emph{canonical} bilinear form, $\tilde{a}(u,v)$,  
associated to the biharmonic equation \eqref{mainProblem}:
\begin{equation} \label{canonical}
\tilde{a}(u,v) :=  
\int_{\Omega} \alpha \, \nabla^2 u \, \nabla^2 v ~dx.
\end{equation}
When $u,v \in H_0^2(\Omega)$, both bilinear forms $a(u,v)$ and 
$\tilde{a}(u,v)$ correspond to the strong 
formulation \eqref{mainProblem} due to second Green's formula
and the zero contribution of the below term:
\begin{equation} \label{zeroContributionTerm}
 \int_{\Omega} (1 - \sigma) \{ 2 \partial_{12}u  \, \partial_{12}v - 
\partial_{11} u  \, \partial_{22} v - 
\partial_{22} u  \, \partial_{11} v \}~dx.
\end{equation}

%%% ------------------------------------------------
\subsection{Effects of high-contrast on the spectrum}
\begin{figure}[th!]
\centering{
\includegraphics[width=6in]{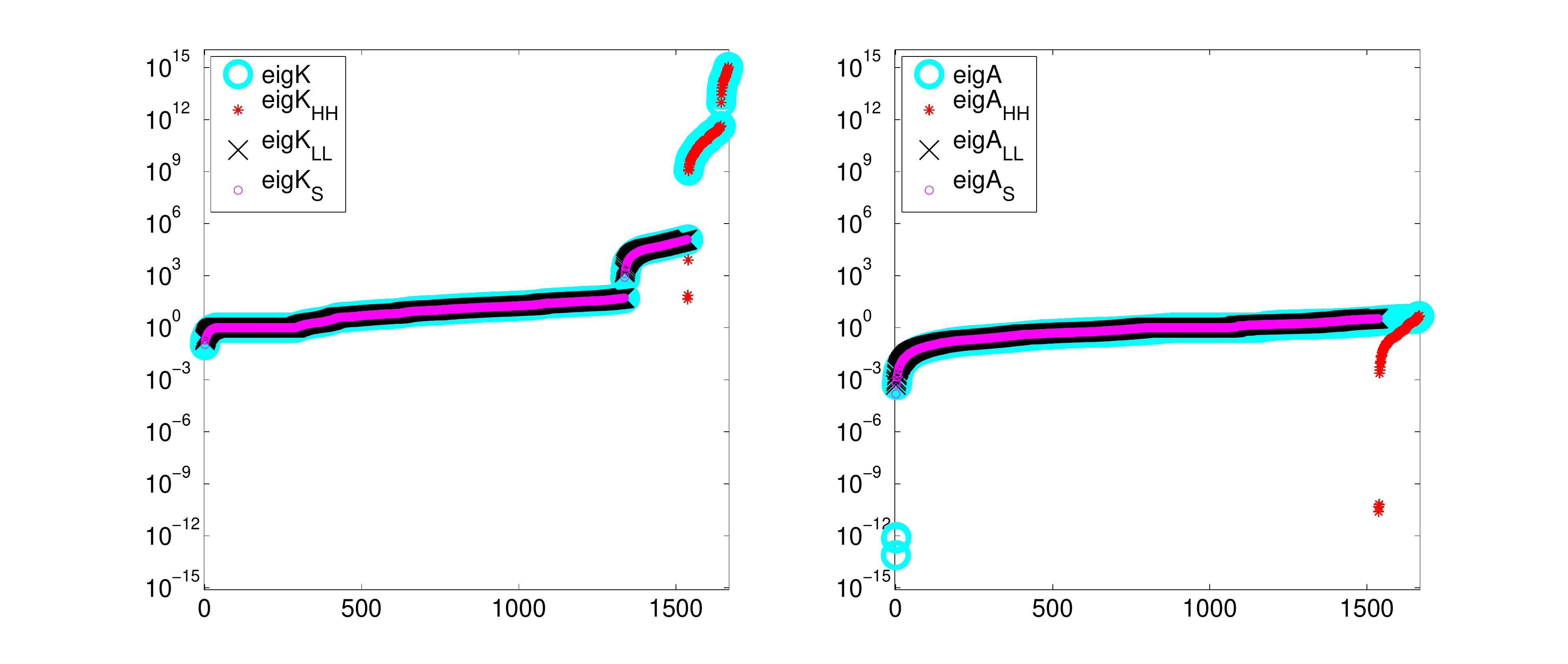}}
\caption{ The HCT discretization of the biharmonic equation with
  $m=10^{10}$.  (Left) The spectrum of the stiffness matrix $K$.
  (Right) Spectrum of the diagonally scaled stiffness matrix. Notice
  the 3 small eigenvalues of order $\mathcal{O}(m^{-1})$ corresponding
  to the kernel of the Neumann matrix, $\textrm{span} \{
  \underline{1}_H, \underline{x}_H, \underline{y}_H \}.$ The plot of
  the two of smallest eigenvalues overlap because they are roughly of
  the same magnitude. \label{fig:spectra}}
\end{figure}
Roughness of PDE coefficients causes loss of robustness of
preconditioners.  This is mainly due to clusters of eigenvalues with
varying magnitude.  Although diagonal scaling has no effect on the
asymptotic behaviour of the condition number, it leads to an improved
clustering in the spectrum.  The spectrum of diagonally scaled
stiffness matrix, $A$, is bounded from above and below except three
eigenvalues in the case of a single isolated highly-bending island.
On the other hand, the spectrum of $K$ contains eigenvalues
approaching infinity with cardinality depending on the number of DOF
contained within highly-bending island.  For the case of HCT
discretization with $m=10^{10}$, we depict the spectra of $K$ and $A$
and their subblocks in Figure \ref{fig:spectra}.  Clustering provided
by diagonal scaling can be advantageous for faster convergence of
Krylov subspace solvers especially when deflation methods designed for
small eigenvalues are used; for further discussion see \cite{AkKl:2007}. 

Utilizing the matrix entry based analysis by Graham and
Hagger~\cite{GrHa:99} for linear FE, in \cite{AkYe2009}, the authors
extended the spectral analysis to cell-centered FV discretization and
obtained an identical spectral result for $A$.  Namely, the number of
small eigenvalues of $A$ depends on the number of isolated islands
comprising the highly-bending region.  We observe a similar behaviour
for the biharmonic plate equation where the only difference is that for each
island we observe three small eigenvalues rather than one.  The three
dimensional kernel of the Neumann matrix is responsible for that
difference; see \S\ref{sec:LRP}.  A similar matrix entry based
analysis can be applied to discretizations of the plate equation, but
this analysis is more involved for HCT and Morley discretizations than
that for linear FE. Hence, we exclude it from scope of this article.

%%% ------------------------------------------------
\section{DISCRETIZATIONS AND LOW-RANK PERTURBATIONS}
\label{sec:LRP}

We consider an $H^2$-conformal and also an $H^2$-nonconformal
Galerkin finite element discretization; Hsieh-Clough-Tocher (HCT)
\cite{HCTorigPaper} and Morley \cite{morleyOrigPaper} elements, respectively.
Let the linear system arising from the discretization be denoted by:
\begin{equation} \label{mainLinearSys}
K(m)~x = b.
\end{equation}
$\Omega$ is decomposed with respect to magnitude of the coefficient value as
\begin{equation} \label{subregionDecomp}
\Omega = \overline{\Omega}_H \cup \Omega_L,
\end{equation}
where $\Omega_H$ and $\Omega_L$ denote the highly- and lowly-bending
regions, respectively.  DOF that lie on the interface, $\Gamma :=
\overline{\Omega}_H \cap \overline{\Omega}_L$, between the two regions
are included in $\Omega_H$. When $m$-dependence is explicitly stated and
the discretization system \eqref{mainLinearSys} is decomposed with
respect to \eqref{subregionDecomp}, i.e., the magnitude of the
coefficient values, we arrive at the following $2 \times 2$ block
system:
\begin{equation} \label{2x2blockSys}
\left[
\begin{array}{ll}
K_{HH}(m) & K_{HL} \\ K_{LH} & K_{LL}
\end{array}
\right]
\left[ \begin{array}{c} x_H \\ x_L \end{array} \right]
= \left[ \begin{array}{c} b_H \\ b_L \end{array} \right].
\end{equation}
There are important properties associated to the $K_{HH}$ block in
\eqref{2x2blockSys}: It is the only block that has $m$-dependence, and
furthermore, a matrix with low-rank kernel can be extracted from it.
Our preconditioner construction is based on LRPs from this extraction.
Next, we explain how to extract the so-called \emph{Neumann matrix}
and why $a(u,v)$ is the suitable bilinear form for that purpose.

By rewriting \eqref{ciarlet1} as the following
\begin{equation} \label{ciarlet2}
a(u,v) =  \int_{\Omega} \alpha \, \left [ \sigma \, \nabla^2 u \, \nabla^2 v + 
(1 - \sigma) \{ \partial_{11}u  \, \partial_{11}v +
\partial_{22} u  \, \partial_{22} v + 
2 \, \partial_{12} u  \, \partial_{12} v \} \right]~dx,
\end{equation}
we see that
\begin{eqnarray}
a(v,v) & = & \alpha \, \sigma~\|\nabla^2 v\|^2_{L_2(\Omega)} + 
\alpha \, (1-\sigma) |v|^2_{H^2(\Omega)} \nonumber \\
& \geq & \alpha \, (1-\sigma) |v|^2_{H^2(\Omega)} \label{gateway}.
\end{eqnarray}
The inequality \eqref{gateway} has important implications. Namely,
$a(v, v)$ is $V_{\mathcal{P}_1}(\Omega)$-coercive where
$V_{\mathcal{P}_1}(\Omega) \subset H^2(\Omega)$ is a closed subspace
such that $V_{\mathcal{P}_1}(\Omega) \cap \mathcal{P}_1 = \emptyset$
and $\mathcal{P}_1$ denotes the set of polynomials of degree at most
$1$.  Furthermore, \eqref{gateway} immediately implies that 
$a(v,v)$ is $H_0^2(\Omega)$-coercive. 

Let $\mathcal{T}^h$ be the triangulation of $\Omega$. Based on
$\mathcal{T}^h$, we define the associated discrete space
$V_{\mathcal{P}_1}^h(\Omega)$ such that $V_{\mathcal{P}_1}^h \cap
\mathcal{P}_1^h = \emptyset$.  A precise definition of the $K_{HH}$
block in the stiffness matrix in \eqref{mainLinearSys} is given by:
\begin{equation*}
\langle K_{HH} \underline{\phi}_H^h, \underline{\psi}_H^h \rangle :=  
a(\phi_H^h,\psi_H^h),
\end{equation*}
where $\phi_H^h, \psi_H^h \in V^h(\Omega_H) \subset H_0^2(\Omega_H)$
are the basis functions. We define the \emph{Neumann matrix}
$\cN_{HH}$ as follows:
\begin{equation*}
\langle \cN_{HH} \underline{\phi}_H, \underline{\psi}_H \rangle := 
a(\phi_H^h,\psi_H^h),
\end{equation*}
where $\phi_H^h, \psi_H^h \in V_{\mathcal{P}_1}^h (\Omega_H)$. 
Since $a(\cdot, \cdot)$ is
$V_{\mathcal{P}_1}(\Omega)$-coercive, this implies by \eqref{gateway} that
\begin{equation} \label{ker_Nhh}
\ker \cN_{HH} = \mathcal{P}_1^h|_{\overline{\Omega}_H} =~
\textrm{span} \{ \underline{1}_H, \underline{x}_H, \underline{y}_H \}.
\end{equation}
Hence, $K_{HH}(m)$ has the following decomposition:
\begin{equation} \label{K_HH_decomp}
K_{HH}(m) = m \, \cN_{HH} + R,
\end{equation}
where $R$ is the coupling matrix corresponding to DOF on the interface
$\Gamma$.  Now, we are in a position to reveal the resulting main
numerical linear algebra implication.  As $m \rightarrow \infty$, the
limiting Schur complement $S_\infty$ in \eqref{limitingS} becomes a
rank-3 perturbation of $K_{LL}$.  This result relies on the fact that the 
inverse of the limiting $K_{HH}$ is of rank-3; see \eqref{lemma:part_i}.
This is due to the fact that $\cN_{HH}$ has a rank 3 kernel whose (normalized)
discretization is given by:
\begin{equation} \label{defn:e_H}
e_H := [\underline{1}_H, \underline{x}_H, \underline{y}_H].
\end{equation}

%%% ------------------------------------------------
\section{MAIN SINGULAR PERTURBATION ANALYSIS RESULTS} \label{sec:SPA}
\begin{lemma} \label{lemma:Main} 
The asymptotic behaviour of the submatrices in \eqref{eq:exact} 
is given by the following:
\begin{eqnarray}
K_{HH}(m)^{-1} & = & e_{H}\eta^{-1} e_{H}^{t} + \mathcal{O}(m^{-1}), 
\label{lemma:part_i}\\
S(m) & = & K_{LL} - 
    (K_{LL} e_{H}) \eta^{-1} (e_{H}^{t} K_{LL}) +
    \mathcal{O}(m^{-1}), \label{lemma:part_ii}\\
K_{LH} K_{HH}(m)^{-1} & = & (K_{L L} e_H) 
\eta^{-1} e_{H}^{t} + \mathcal{O}(m^{-1}), \label{lemma:part_iii}
\end{eqnarray}
where
\begin{equation} \label{defn:eta}
\eta := e_H^t \, K_{HH} \, e_H.
\end{equation} 
\end{lemma}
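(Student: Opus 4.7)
The plan is to establish \eqref{lemma:part_i} first by a classical singular perturbation argument for $K_{HH}(m)^{-1}$, and then to obtain \eqref{lemma:part_ii} and \eqref{lemma:part_iii} by direct substitution into the Schur complement identity $S(m) = K_{LL} - K_{LH} K_{HH}(m)^{-1} K_{HL}$.

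For \eqref{lemma:part_i}, I would begin from the decomposition $K_{HH}(m) = m\cN_{HH} + R$ of \eqref{K_HH_decomp}. Setting $\epsilon = m^{-1}$ and factoring out $m$, the problem reduces to expanding $(\cN_{HH} + \epsilon R)^{-1}$ as $\epsilon \to 0$, a classical singularly perturbed inverse: $\cN_{HH}$ is symmetric positive semidefinite with exactly the three-dimensional kernel $\mathrm{range}(e_H)$ by \eqref{ker_Nhh}, while $R$ is an $m$-independent symmetric perturbation. Working in an orthonormal basis adapted to the splitting $\R^{n_H} = \ker\cN_{HH} \oplus (\ker\cN_{HH})^\perp$ turns the matrix into the $2\times 2$ block
\begin{equation*}
\cN_{HH} + \epsilon R = \left( \begin{matrix} \epsilon R_{00} & \epsilon R_{01} \\ \epsilon R_{10} & \Lambda + \epsilon R_{11} \end{matrix} \right),
\end{equation*}
with $\Lambda$ the positive-definite restriction of $\cN_{HH}$ to $(\ker\cN_{HH})^\perp$. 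Taking the block inverse via the Schur complement with respect to the uniformly invertible lower-right block and Taylor-expanding in $\epsilon$ shows $(\cN_{HH}+\epsilon R)^{-1} = \epsilon^{-1} V_1 R_{00}^{-1} V_1^t + \cO(1)$, where $V_1$ collects an orthonormal basis of $\ker\cN_{HH}$ as columns. Multiplying back by $\epsilon$ leaves the leading term $V_1 R_{00}^{-1} V_1^t$ plus an $\cO(m^{-1})$ remainder. A linear change of basis from $V_1$ to the natural basis $e_H$, together with the identity $e_H^t R e_H = e_H^t K_{HH}(m) e_H = \eta$ (valid since $\cN_{HH} e_H = 0$), rewrites this leading term as $e_H \eta^{-1} e_H^t$ and completes \eqref{lemma:part_i}.

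The main obstacle will be verifying that the kernel block $R_{00}$ --- equivalently $\eta$ --- is invertible, since this is the nondegeneracy condition that powers the whole expansion. This is where coercivity enters: $K_{HH}(m)$ is a principal submatrix of the symmetric positive definite stiffness matrix $K(m)$, hence is itself symmetric positive definite, so the quadratic form $v^t K_{HH}(m) v = v^t R v$ is strictly positive for $0 \neq v \in \ker \cN_{HH}$. This yields positive definiteness of $\eta$ uniformly in $m$, so the block expansion is justified and the $\cO(m^{-1})$ remainder is genuinely controlled.

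Once \eqref{lemma:part_i} is in hand, substituting it into $S(m) = K_{LL} - K_{LH} K_{HH}(m)^{-1} K_{HL}$ and using $K_{HL} = K_{LH}^t$ immediately yields \eqref{lemma:part_ii}, the remainder staying $\cO(m^{-1})$ since the multiplying factors $K_{LH}, K_{HL}$ are $m$-independent. The analogous one-sided multiplication $K_{LH}\cdot K_{HH}(m)^{-1}$ produces \eqref{lemma:part_iii}. The rank-$3$ structure of the leading correction, central to the preconditioner design discussed after \eqref{defn:e_H}, is then transparent from the factored form $e_H\eta^{-1}e_H^t$.
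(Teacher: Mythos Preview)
Your approach is correct and essentially mirrors the paper's own proof: both pass to a basis adapted to the splitting $\ker\cN_{HH}\oplus(\ker\cN_{HH})^\perp$, show that the non-kernel block of $K_{HH}(m)$ has inverse of order $\cO(m^{-1})$, extract the leading rank-$3$ term $e_H\eta^{-1}e_H^t$ via a $2\times2$ block-inverse formula, and then obtain \eqref{lemma:part_ii}--\eqref{lemma:part_iii} by direct substitution. Your explicit justification that $\eta$ is positive definite (from $K_{HH}(m)$ being a principal submatrix of the SPD stiffness matrix) is a welcome addition that the paper leaves implicit.
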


\begin{proof}
Since $\cN_{HH}$ is symmetric positive semidefinite, using \eqref{ker_Nhh}
we have the following spectral decomposition where
$n_H$ denotes the cardinality of DOF in $\overline{\Omega}_H$:
\begin{equation} \label{spectralDecomp1}
Z^t \cN_{HH} Z =
\text{diag}(\lambda_1, \ldots, \lambda_{n_H-3}, 0, 0, 0),
\end{equation}
where $\{ \lambda_i :\ i = 1, \ldots, n_H \}$ is a non-increasing
sequence of eigenvalues of $\cN_{HH}$ and $Z$ is orthogonal.  Since,
the eigenvectors corresponding to the zero eigenvalues are 
discretization of the polynomials $1, x$, and $y$, we can write
$Z = \left[\tilde{Z} \ | \ e_H \right]$ where $e_H$ is defined
in \eqref{defn:e_H}. Using \eqref{K_HH_decomp}, we have:
\begin{eqnarray}
Z^{t}K_{HH}(m)Z  & = & \left[
\begin{matrix}
  m~\textrm{diag} (\lambda_{1}, \ldots, \lambda_{n_H - 3}) +
  \tilde{Z}^{t}R \tilde{Z} & ~\tilde{Z}^{t}R e_{H}
  \\ e_{H}^{t} R \tilde{Z} & e_{H}^{t} R e_{H} \\
\end{matrix} 
\right] \nonumber \\ 
& =: & \left[
\begin{matrix}
\Lamalp & \delt \\ \deltt & \eta \label{spectralDecomp1b} \\
\end{matrix}
\right].
\end{eqnarray}
%Observe that $\eta$ is the same $3 \times 3$ SPD matrix defined in \eqref{defn:eta}.

To find the limiting form of $K_{HH}(m)^{-1}$ note that
\begin{eqnarray*}
\tilde{\Lambda}(m) & = & 
m~\textrm{diag}(\lambda_{1}, \ldots, \lambda_{n_H - 3}) + 
\tilde{Z}^{t} R \tilde{Z} \\
&=&
m~\textrm{diag}(\lambda_{1}, \ldots, \lambda_{n_H - 3}) \left( \tilde{I} +
m^{-1}~\textrm{diag}(\lambda_{1}^{-1}, \ldots,
\lambda_{n_H - 3}^{-1})\tilde{Z}^{t} R \tilde{Z} \right).
\end{eqnarray*}
Then, 
\begin{equation*}
\|\tilde{\Lambda}(m)^{-1}\|_2 \nonumber  \leq
\frac{m^{-1} \, \max_{i \leq n_H-3} \ \lambda_i^{-1}} 
{1 - m^{-1} \,  \max_{i \leq n_H-3} \ \lambda_i^{-1} \, 
\|\tilde{Z}^t R \tilde{Z}\|_2 },
\end{equation*}
for sufficiently large $m$, we can conclude the following:
\begin{equation} \label{result1_lemma}
   \tilde{\Lambda}(m)^{-1} =  \cO(m^{-1}).
\end{equation}
We proceed with the following inversion:
\begin{equation*} \label{inversion1} \left[ \begin{array}{cc}
      \tilde{\Lambda}(m) & \delt \\
      \delt^t & \eta
\end{array} \right]^{-1} = U(m)~V(m)~U(m)^t,
\end{equation*}
where
\begin{eqnarray*}
  U(m) & := &
  \left[ \begin{array}{cc}
      \tilde{I} & -\tilde{\Lambda}(m)^{-1} \delt \\
      0^t & 1
    \end{array} \right],\\
  V(m) & := &
  \left[ \begin{array}{cc}
      \tilde{\Lambda}(m)^{-1} & 0 \\ 
      0^t & \left(\eta - \delt^t \tilde{\Lambda}(m)^{-1} \delt
      \right)^{-1}
\end{array} \right].
\end{eqnarray*}

Then, \eqref{result1_lemma}  implies that
\begin{eqnarray*} \label{result3_lemma}
  U(m) & = & I + \cO(m^{-1}), \\
  V(m) & = & \left[ \begin{array}{cc} O & 0 \\ 0^t & \eta^{-1}
\end{array} \right]  + \cO(m^{-1}).
\end{eqnarray*}
Combining the above results, we arrive at
\begin{equation*}
  \label{inversion_limit}
  \left[ \begin{array}{cc}
      \tilde{\Lambda}(m) & \delt \\
      \delt^t & \eta
    \end{array} \right]^{-1}
  \ = \
  \left[ \begin{array}{cc}
      O & 0 \\ 0^t & \eta^{-1}
    \end{array} \right]
  \ + \ \mathcal{O}(m^{-1})\ ,
\end{equation*}
and, by \eqref{spectralDecomp1b}, we have
\begin{eqnarray}
  \label{eq:largest}
  K_{HH}(m)^{-1} \ & = & 
  \ Z \left[ \begin{array}{cc}
      O & 0 \\ 0^t & \eta^{-1}
    \end{array} \right] Z^t \ + \ \mathcal{O}(m^{-1}) \ \\
  & =: & \ e_H \eta^{-1} e_H^t  \ + \  \mathcal{O}(m^{-1}) \ ,\nonumber
\end{eqnarray}
which proves \eqref{lemma:part_i} of the Lemma.

Parts \eqref{lemma:part_ii} and \eqref{lemma:part_iii} follow from simple 
substitution and using \eqref{SchurComplement1}. \hfill \qed \\
\end{proof}

\begin{remark}
If we further decompose DOF associated with $\overline{\Omega}_H$
into a set of interior DOF associated with index $I$ and
interface DOF with index $\Gamma$, we obtain the following block
representation of $K_{HH}$:
\begin{equation} \label{A_HH_blockwise}
K_{HH}(m) \ = \ \left[ \begin{array}{cc}
K_{II}(m) & K_{I \Gamma}(m) \\
K_{\Gamma I}(m) & K_{\Gamma \Gamma}(m)
\end{array} \right].
\end{equation}
The entries in the block $K_{\Gamma \Gamma}(m)$ are
assembled from contributions both from finite elements in $\Omega_H$
and $\Omega_L$, i.e.  $K_{\Gamma \Gamma}(m) = A^{(H)}_{\Gamma
  \Gamma}(m) + A^{(L)}_{\Gamma \Gamma}$. 

We further write $e_H$ in block form; 
$e_H = ( e_I^t \ , \ e_{\Gamma}^t)^t$.
Finally we note that the off-diagonal blocks have the decomposition:
\begin{equation}
\label{A_HL_blockwise}
K_{LH} \ = \ \left[ \begin{array}{cc}
0 & K_{L\Gamma}
\end{array} \right] \ = \ K_{HL}^t.
\end{equation}
Therefore, the results of Lemma \ref{lemma:Main} can be rewritten as
the following:
\begin{eqnarray*}
K_{HH}(m)^{-1} & = & e_{H} 
\left( e_\Gamma^t K_{\Gamma \Gamma}^{(L)} e_\Gamma \right)^{-1} e_{H}^t +
    \mathcal{O}(m^{-1}),\\
S(m) & = & K_{LL} - (K_{L\Gamma} e_{\Gamma}) 
\left( e_\Gamma^t K_{\Gamma \Gamma}^{(L)} e_\Gamma \right)^{-1} 
(e_{\Gamma}^{t} K_{\Gamma L} + \mathcal{O}(m^{-1}),\\
K_{LH} K_{HH}(m)^{-1} & = & (K_{L \Gamma} e_{\Gamma})
\left( e_\Gamma^t K_{\Gamma \Gamma}^{(L)} e_\Gamma \right)^{-1} 
e_{H}^{t} + \mathcal{O}(m^{-1}).
\end{eqnarray*}
\end{remark}

%%% ------------------------------------------------
\subsection{Qualitative nature of the solution}
\label{sec:qualitativeNature}

We advocate the usage of SPA because it is a very effective tool in
gaining qualitative insight about the asymptotic behavior of the
solution of the underlying PDE.  Through SPA, in
Lemma~\ref{lemma:Main}, we were able to fully reveal the asymptotic
behaviour of the submatrices of $K$ in \eqref{eq:exact}. This
information leads to a characterization of the limit of the underlying
discretized inverse operator.  We now prove that \emph{the solution
  over the highly-bending island converges to a linear polynomial}. In
other words, $x_H^\infty \in \text{span}~e_H$.  This is probably the
most fundamental qualitative feature of the solution of the
high-contrast biharmonic plate equation.

\begin{lemma}
Let $e_H$ as in \eqref{defn:e_H}. Then, 
\begin{equation} \label{x_HConst}
x_H(m) = e_H~c_H  \ + \ \mathcal{O}(m^{-1}),
\end{equation}
where $c_H$ is a $3 \times 1$ vector determined by the solution in the
lowly-bending region.
\end{lemma}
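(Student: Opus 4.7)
The plan is to read $x_H$ off the first block row of the saddle-point system \eqref{2x2blockSys} and then feed in the asymptotic expansion of $K_{HH}(m)^{-1}$ from \eqref{lemma:part_i}. Explicitly, the identity
\begin{equation*}
x_H(m) \;=\; K_{HH}(m)^{-1}\bigl(b_H - K_{HL}\, x_L(m)\bigr)
\end{equation*}
combined with $K_{HH}(m)^{-1} = e_H \eta^{-1} e_H^t + \mathcal{O}(m^{-1})$ immediately gives
\begin{equation*}
x_H(m) \;=\; e_H\,\eta^{-1} e_H^t \bigl(b_H - K_{HL}\, x_L(m)\bigr) \;+\; \mathcal{O}(m^{-1})\cdot\bigl(b_H - K_{HL}\, x_L(m)\bigr),
\end{equation*}
and the first term is already of the form $e_H$ times a $3\times 1$ vector, as desired.

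Next I would argue that $x_L(m)$ is not only bounded but in fact converges to a limit $x_L^{\infty}$ at rate $\mathcal{O}(m^{-1})$, so that the $\mathcal{O}(m^{-1})$ remainder above really stays $\mathcal{O}(m^{-1})$ and, more importantly, the coefficient vector has a well-defined $m$-independent limit. For this I would use the Schur complement equation $S(m)\, x_L(m) = b_L - K_{LH} K_{HH}(m)^{-1} b_H$ together with parts \eqref{lemma:part_ii} and \eqref{lemma:part_iii} of Lemma \ref{lemma:Main}: the right-hand side is $(b_L - (K_{LL} e_H)\eta^{-1} e_H^t b_H) + \mathcal{O}(m^{-1})$, and $S(m) = S_\infty + \mathcal{O}(m^{-1})$ with $S_\infty$ invertible (this is the rank-3 perturbation of $K_{LL}$ underlying the preconditioner), hence
\begin{equation*}
x_L(m) \;=\; x_L^{\infty} + \mathcal{O}(m^{-1}).
\end{equation*}

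Finally I define
\begin{equation*}
c_H \;:=\; \eta^{-1} e_H^t \bigl(b_H - K_{HL}\, x_L^{\infty}\bigr),
\end{equation*}
which is a $3\times 1$ vector depending only on $b_H$ and the limiting lowly-bending solution $x_L^{\infty}$, i.e.\ on data in $\Omega_L$ through the Schur equation. Plugging the decomposition $x_L(m) = x_L^{\infty} + \mathcal{O}(m^{-1})$ back into the expansion of $x_H(m)$ and collecting the two $\mathcal{O}(m^{-1})$ contributions (one from the remainder of $K_{HH}(m)^{-1}$, one from $K_{HL}(x_L(m)-x_L^{\infty})$) yields \eqref{x_HConst}.

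The main obstacle, and the only place where the argument is not just substitution, is step two: guaranteeing that $S_\infty$ is invertible and that $x_L(m)$ really does converge at rate $\mathcal{O}(m^{-1})$. The rest — applying Lemma \ref{lemma:Main} and grouping the remainders — is bookkeeping, but this stability estimate for $x_L(m)$ is what actually promotes the asymptotic information about the matrix $K_{HH}(m)^{-1}$ into asymptotic information about the vector $x_H(m)$, and it is essential for identifying $c_H$ as a genuine constant rather than an $m$-dependent quantity.
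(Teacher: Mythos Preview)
Your proposal is correct and follows essentially the same approach as the paper: block elimination to write $x_L(m)=S(m)^{-1}\{b_L-K_{LH}K_{HH}(m)^{-1}b_H\}$, application of Lemma~\ref{lemma:Main} to get $x_L(m)=x_L^\infty+\mathcal{O}(m^{-1})$, and then substitution into $x_H(m)=K_{HH}(m)^{-1}\{b_H-K_{HL}x_L(m)\}$ with the same definition of $c_H$. You are in fact slightly more careful than the paper in flagging the invertibility of $S_\infty$ as the one nontrivial input; the paper simply writes $S_\infty^{-1}$ without comment.
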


\begin{proof}
We prove the result by providing an explicit quantification of
the limiting process based on Lemma~\ref{lemma:Main}:
\begin{equation*}
\begin{array}{lllll}
x_L(m) & = & S^{-1}(m)~
  \{b_L - K_{LH} \, K_{HH}^{-1}(m) b_H \} &&\\ 
  & = & S_\infty^{-1} \{b_L - 
  K_{LH} \left( e_H \eta^{-1} e_H^t \right) b_H \} + \mathcal{O}(m^{-1})\\
  & =: & x_L^\infty + \mathcal{O}(m^{-1}),\\
  x_H(m) & = & 
  K_{HH}^{-1}(m)~ \{b_H - K_{HL} \,  x_L(m) \} &&\\ 
  & = & e_H \eta^{-1} e_H^t 
  \{ b_H - K_{HL} \, x_L^\infty \} +
  \mathcal{O}(m^{-1})\\
  & =: & e_H~c_H  \ + \ \mathcal{O}(m^{-1}).
\end{array}
\end{equation*}
\hfill \qed
\end{proof}

%%% ------------------------------------------------
\section{CONSTRUCTION OF THE PRECONDITIONER} \label{sec:AGKS}

The exact inverse of $K$ can  be written as:
\begin{eqnarray}
K^{-1} & = &
  { \left[ \begin{array}{cc}  I_{HH} & ~~- K_{HH}^{-1} K_{HL}\\ 
        0 & I_{LL}\end{array}\right]}~
  { \left[ \begin{array}{cc}    K_{HH}^{-1}& 0 \\
        0& ~~S^{-1} \end{array}\right]}~
  { \left[ \begin{array}{cc}  I_{HH} & 0 \\    -K_{LH}K_{HH}^{-1}& ~~I_{LL}
      \end{array}\right]}, \label{eq:exact}
\end{eqnarray}
where $I_{HH}$ and $I_{LL}$ denote the identity matrices of the
appropriate dimension and the Schur complement $S$ is explicitly given by:
\begin{equation} \label{SchurComplement1}
S(m) = K_{LL} - K_{LH}K_{HH}^{-1}(m) K_{HL}.
\end{equation} 

Let the limit in \eqref{lemma:part_i} be denoted by 
$K_{HH}^{\infty^{\dagger}}:=e_{H}\eta^{-1} e_{H}^{t}$. Based on
the above perturbation analysis, our proposed preconditioner is
defined as follows:
\begin{equation} \label{MainPrec1}
B_{AGKS}(m):= 
\left[ \begin{array}{cc}  I_{HH} & - K_{HH}^{\infty^\dagger} K_{HL} \\
0 & I_{LL}\end{array}\right]
\left[ \begin{array}{cc}    K_{HH}(m)^{-1}& 0 \\
0&  S_\infty^{-1} \end{array}\right]
\left[ \begin{array}{cc}  I_{HH} & 0 \\    - K_{LH}K_{HH}^{\infty^\dagger} & I_{LL}
\end{array}\right]
\end{equation}

We need the following auxillary result to be used in the proof of
Theorem \ref{thm:prec_robust} which characterizes the spectral
behaviour of the preconditioned system.
\begin{lemma} 
For sufficiently large $m$, we have
\begin{equation}
\label{A_HH0.5}
K_{HH}^{-1/2} = e_H \eta^{-1/2} e_H^t + \cO(m^{-1/2}),
\end{equation}
where $\eta$ is the $3 \times 3$ SPD matrix independent of $m$ defined 
in \eqref{defn:eta}.
\end{lemma}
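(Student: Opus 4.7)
My plan is to reduce the statement to the already-established asymptotic \eqref{lemma:part_i} by invoking the Hölder continuity of the square-root map on the cone of positive semidefinite matrices. The key classical ingredient is the inequality
\begin{equation*}
\|A^{1/2} - B^{1/2}\|_2 \leq \|A - B\|_2^{1/2},
\end{equation*}
valid for any pair of positive semidefinite matrices $A, B$, which is a consequence of the operator monotonicity of $x \mapsto \sqrt{x}$ on $[0,\infty)$ (see, e.g., Bhatia, \emph{Matrix Analysis}, Chapter X). I would apply it with $A := K_{HH}(m)^{-1}$, which is SPD because $K_{HH}(m)$ is, and $B := e_H \eta^{-1} e_H^t$, which is PSD since $\eta$ is the SPD compression of the SPD matrix $K_{HH}$ to the span of $e_H$. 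Part \eqref{lemma:part_i} of Lemma \ref{lemma:Main} then immediately yields
\begin{equation*}
\|K_{HH}(m)^{-1/2} - (e_H \eta^{-1} e_H^t)^{1/2}\|_2 \leq \|K_{HH}(m)^{-1} - e_H \eta^{-1} e_H^t\|_2^{1/2} = \cO(m^{-1/2}).
\end{equation*}

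It remains to identify $(e_H \eta^{-1} e_H^t)^{1/2}$ with the candidate $e_H \eta^{-1/2} e_H^t$. The essential point is that the columns of $e_H$ are orthonormal: they form the last three columns of the orthogonal matrix $Z = [\tilde Z \mid e_H]$ introduced in the proof of Lemma \ref{lemma:Main}, so $e_H^t e_H = I_3$. Therefore
\begin{equation*}
(e_H \eta^{-1/2} e_H^t)^2 = e_H \eta^{-1/2}(e_H^t e_H) \eta^{-1/2} e_H^t = e_H \eta^{-1} e_H^t,
\end{equation*}
and $e_H \eta^{-1/2} e_H^t$ is PSD (as $\eta^{-1/2}$ is SPD and $e_H$ has full column rank). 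Uniqueness of the PSD square root gives $(e_H \eta^{-1} e_H^t)^{1/2} = e_H \eta^{-1/2} e_H^t$, which combined with the preceding display proves \eqref{A_HH0.5}.

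The main obstacle is conceptual rather than technical: the rate $\cO(m^{-1/2})$ obtained here is strictly worse than the $\cO(m^{-1})$ rate in \eqref{lemma:part_i}, and this loss is intrinsic—the $n_H - 3$ eigenvalues of $K_{HH}(m)$ that grow proportionally to $m$ contribute eigenvalues of order $m^{-1/2}$ to $K_{HH}(m)^{-1/2}$, which cannot be absorbed into the rank-$3$ limit. If one prefers a self-contained argument that avoids citing operator monotonicity, the spectral decomposition \eqref{spectralDecomp1b} can be used directly: the three ``small'' eigenvalues of $M(m) := Z^t K_{HH}(m) Z$ and their eigenvectors converge at rate $\cO(m^{-1})$ by standard perturbation theory, while the $n_H-3$ ``large'' eigenvalues contribute the $\cO(m^{-1/2})$ remainder in $M(m)^{-1/2}$—but the Hölder-continuity route above is noticeably shorter.
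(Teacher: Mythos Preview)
Your proof is correct and takes a genuinely different route from the paper's. The paper works directly with the spectral decomposition of $K_{HH}(m)$: it diagonalizes $\eta$ to obtain a fully diagonal limit $Z_\infty\,\mathrm{diag}(0,\ldots,0,\mu_{H_1}^{-1},\mu_{H_x}^{-1},\mu_{H_y}^{-1})\,Z_\infty^t$ for $K_{HH}(m)^{-1}$, invokes perturbation theory for symmetric eigenproblems to get $Q(m)=Z_\infty+\cO(m^{-1})$ for the eigenvector matrix, tracks the three small eigenvalues of $K_{HH}(m)$ to their $m$-independent limits, and then reassembles $K_{HH}(m)^{-1/2}$ from $Q(m)$ and $\mathrm{diag}(\mu_i(m)^{-1/2})$. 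Your argument bypasses all of this by applying the single inequality $\|A^{1/2}-B^{1/2}\|_2\le\|A-B\|_2^{1/2}$ to the already-established \eqref{lemma:part_i}, together with the clean algebraic identification of $(e_H\eta^{-1}e_H^t)^{1/2}$ via $e_H^t e_H=I_3$. What you gain is brevity and a proof that is arguably more robust---the paper's eigenvector convergence step $Q(m)=Z_\infty+\cO(m^{-1})$ is delicate when the limiting eigenvalue $0$ has multiplicity $n_H-3$, and Lipschitz continuity of eigenvectors is not automatic there. What the paper's approach buys is self-containment (no appeal to operator-monotone function theory) and explicit control over each eigenpair, which may be useful if one later wants finer information about the spectrum. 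Your closing remark already anticipates this trade-off accurately.
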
 

\begin{proof}
We start by writing down the spectral decomposition of $K_{HH}(m)$
\begin{equation*}
\label{eq:eigdecomp}
Q(m)^t K_{HH}(m) Q(m) \ = \
\ \mathrm{diag}(\mu_1(m), \ldots , \mu_{n_H-3}(m), \mu_{n_H-2}(m) , 
\mu_{n_H-1}(m), \mu_{n_H}(m)),
\end{equation*}
where $\{\mu_i(m): \ i = 1, \ldots , n_H\}$ denotes a non-increasing
ordering of the eigenvalues of $K_{HH}(m) $. Since $K_{HH}(m)$ is SPD,
we have $\mu_i(m) > 0 $ for all $i\leq n_H$.  We use the main fact
that eigenvalues and eigenvectors of a symmetric matrix are Lipschitz
continuous functions of the matrix
entries~\cite{kato1982_book,watkins2002_book}.
%\cite{Watkinsp.468(eigvecs), Kato82(eigs)}.
% T. Kato, A short introduction to perturbation theory for linear operators
% Springer-Verlag 1982

By \eqref{spectralDecomp1} and \eqref{eq:largest} in Lemma \ref{lemma:Main},
we give the following spectral decomposition:
\begin{equation} \label{interStep2}
K_{HH}^{-1}(m) = z_1 \, 0 \, z_1^t + \ldots + z_{n_{H}-3} \, 0 \, z_{n_{H}-3}^t 
+ e_H \, \eta^{-1} \, e_H^t + \cO(m^{-1}).
\end{equation}
Note that $\eta$ in \eqref{spectralDecomp1b} is a $3 \times 3$ symmetric, 
and hence, diagonalizable matrix. 
We proceed towards a fully diagonalized form of the limiting $K_{HH}^{-1}(m)$. 
For that, we use the diagonalization of $\eta^{-1}$:
\begin{equation*} \label{interStep3}
\eta^{-1} = \hat{z}_{H_1} \, \mu_{H_1}^{-1} \, \hat{z}_{H_1}^t +  
\hat{z}_{H_x} \, \mu_{H_x}^{-1} \, \hat{z}_{H_x}^t + 
\hat{z}_{H_y} \, \mu_{H_y}^{-1} \, \hat{z}_{H_y}^t.
\end{equation*}
Therefore, we have the following expression for the last term in 
\eqref{interStep2}:
\begin{equation} \label{lastTermDiag}
e_H \eta^{-1} e_H^t = [z_{H_1} \, z_{H_x} \, z_{H_y}] \,
\text{diag}(\mu_{H_1}^{-1}, \mu_{H_x}^{-1}, \mu_{H_y}^{-1}) \, 
 [z_{H_1} \, z_{H_x} \, z_{H_y}]^t,
\end{equation}
where 
\begin{eqnarray*}
\left[z_{H_1} \, z_{H_x} \, z_{H_y}\right] & := & 
\left[e_{H_1} \, e_{H_x} \, e_{H_y}\right] 
\left[\hat{z}_{H_1} \, \hat{z}_{H_x} \, \hat{z}_{H_y}\right]\\
\left[e_{H_1}, e_{H_x}, e_{H_y}\right] & := & e_H.
\end{eqnarray*}

Now by substituting \eqref{lastTermDiag} in \eqref{interStep2},
we have the following spectral decomposition which corresponds to the fully
diagonalized version:
\begin{eqnarray} 
K_{HH}^{-1}(m) & = & z_1 \, 0 \, z_1^t + \ldots + 
z_{n_{H}-3} \, 0 \, z_{n_{H}-3}^t +
z_{H_1} \, \mu_{H_1} \, z_{H_1}^t  + z_{H_x} \, \mu_{H_x} \, z_{H_x}^t  +
z_{H_y} \, \mu_{H_y} \, z_{H_y}^t  + \cO(m^{-1}) \nonumber \\
& =: & Z_\infty \, 
\text{diag}(0, \ldots, 0, \mu_{H_1}^{-1}, \mu_{H_x}^{-1}, \mu_{H_y}^{-1}) \, Z_\infty^t
+ \cO(m^{-1}) \label{fullDiagVersion}.
\end{eqnarray}
The expression in \eqref{fullDiagVersion} also implies the convergence
of the eigenvectors of $K_{HH}(m)$:
\begin{equation} \label{eigvecConv}
Q(m) = Z_\infty + \cO(m^{-1}).
\end{equation}
Note that $Z_\infty$ differs from $Z$ in \eqref{spectralDecomp1} only
in the last three columns due to diagonalization of $\eta$.

From \eqref{fullDiagVersion}, we obtain a characterization of the
largest three eigenvalues of $K_{HH}(m)^{-1}$:
\begin{subequations} \label{etaSpectralDecomp}
\begin{eqnarray} 
\mu_{n_H - 2}(m)^{-1} & = & \mu_{H_1}^{-1} + \mathcal{O}(m^{-1})\\
\mu_{n_H - 1}(m)^{-1} & = &  \mu_{H_x}^{-1} + \mathcal{O}(m^{-1})\\ 
\mu_{n_H}(m)^{-1} & = &  \mu_{H_y}^{-1} + \mathcal{O}(m^{-1})\ .
\end{eqnarray}
\end{subequations}
Using \eqref{fullDiagVersion} and \eqref{etaSpectralDecomp}, we arrive at the 
following:
\begin{eqnarray}
& & \text{diag} (\mu_{1}(m)^{-1/2}, \ldots, \mu_{n_H - 3}(m)^{-1/2}, 
\mu_{n_H - 2}(m)^{-1/2}, \mu_{n_H - 1}(m)^{-1/2}, \mu_{n_H}(m)^{-1/2}) \nonumber \\
& & =  
\text{diag} (0, \ldots, 0, \mu_{H_1}^{-1/2}, \mu_{H_x}^{-1/2}, \mu_{H_y}^{-1/2})
+ \cO(m^{-1/2}). \label{interStep4}
\end{eqnarray}
By using \eqref{interStep4} and \eqref{eigvecConv}, we arrive at the 
desired result:
\begin{eqnarray*}
K_{HH}(m)^{-1/2}  & = & Q(m) \, 
\text{diag} (\mu_{1}(m)^{-1/2}, \ldots, \mu_{n_H}(m)^{-1/2})
Q(m)^t \\
& = & Z_\infty \,  \text{diag} (0, \ldots, 0, \mu_{H_1}^{-1/2}, \mu_{H_x}^{-1/2}, \mu_{H_y}^{-1/2}) \, Z_\infty^t
+ \cO(m^{-1/2})\\
& = &  [z_{H_1} \, z_{H_x} \, z_{H_y}] \,
\text{diag} ( \mu_{H_1}^{-1/2}, \mu_{H_x}^{-1/2}, \mu_{H_y}^{-1/2}) \,
[z_{H_1} \, z_{H_x} \, z_{H_y}]^t + \cO(m^{-1/2}) \\
& = & e_H \, \eta^{-1/2} \, e_H^t + \cO(m^{-1/2}).
\end{eqnarray*}
\hfill \qed \\
\end{proof}

The following theorem shows that $B_{AGKS}$ is an effective preconditioner
for $m \gg 1$.
\begin{theorem} \label{thm:prec_robust}
For sufficiently large $m$, we have
\[
\sigma(B_{AGKS}(m)~K(m)) \ \subset \ [1-cm^{-1/2},1+cm^{-1/2}]
\]
for some constant $c$ independent of $m$, and therefore
\[
\kappa(B_{AGKS}(m)~K(m)) \ = \ 1 \ + \ \cO(m^{-1/2}).
\]
\end{theorem}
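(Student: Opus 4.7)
My plan is to exploit the symmetry of the preconditioned system to reduce the problem to a norm bound on a perturbation of the identity, and then invoke the auxiliary estimate \eqref{A_HH0.5} on $K_{HH}^{-1/2}$ to gain the $\cO(m^{-1/2})$ decay despite $\|K(m)\|_2=\cO(m)$. First I note that with $(K_{HH}^{\infty^\dagger})^t=K_{HH}^{\infty^\dagger}$ and $K_{HL}=K_{LH}^t$, the outer factors of \eqref{MainPrec1} are transposes of one another; writing $\tilde{U}$ for the upper-triangular factor and $\tilde{D}=\mathrm{diag}(K_{HH}^{-1},S_\infty^{-1})$ for the middle block, we obtain $B_{AGKS}(m)=\tilde{U}\tilde{D}\tilde{U}^t$, which is SPD. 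Consequently $\sigma(B_{AGKS}K)=\sigma(K^{1/2}B_{AGKS}K^{1/2})$ lies on the positive real axis, and it suffices to prove
\begin{equation*}
\bigl\|K^{1/2}B_{AGKS}K^{1/2} - I\bigr\|_2 \ = \ \cO(m^{-1/2}).
\end{equation*}

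Next I would replace the symmetric square root by a block-Cholesky factor that respects the block structure. Using $K=L_K D_K L_K^t$ with $L_K=\left[\begin{matrix} I & 0 \\ K_{LH}K_{HH}^{-1} & I \end{matrix}\right]$, $D_K=\mathrm{diag}(K_{HH},S)$, and setting $C_K = L_K D_K^{1/2}$, the task reduces to showing $C_K^t B_{AGKS} C_K = I+\cO(m^{-1/2})$. A direct computation gives $L_K^t\tilde{U} = I + E_1$ with
\begin{equation*}
E_1 \ = \ \left[\begin{matrix} 0 & (K_{HH}^{-1}-K_{HH}^{\infty^\dagger})K_{HL} \\ 0 & 0 \end{matrix}\right] \ = \ \cO(m^{-1})
\end{equation*}
by part \eqref{lemma:part_i} of Lemma \ref{lemma:Main}. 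Expanding $(I+E_1)\tilde{D}(I+E_1^t)$ and sandwiching with $D_K^{1/2}$ yields four pieces: the diagonal piece $D_K^{1/2}\tilde{D}D_K^{1/2}=\mathrm{diag}(I,\,S^{1/2}S_\infty^{-1}S^{1/2})=I+\cO(m^{-1})$ via \eqref{lemma:part_ii}; two cross terms whose only nonzero block contains the factor $K_{HH}^{1/2}(K_{HH}^{-1}-K_{HH}^{\infty^\dagger})=K_{HH}^{-1/2}-K_{HH}^{1/2}K_{HH}^{\infty^\dagger}$; and a quadratic remainder carrying two copies of that same factor.

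The crux, and the main obstacle, is bounding $\|K_{HH}^{1/2}(K_{HH}^{-1}-K_{HH}^{\infty^\dagger})\|_2$ by $\cO(m^{-1/2})$. The naive estimate $\|K^{1/2}\|\cdot\|B_{AGKS}-K^{-1}\|\cdot\|K^{1/2}\|=\cO(m^{1/2})\cdot\cO(m^{-1})\cdot\cO(m^{1/2})=\cO(1)$ is far too weak, because it ignores that the error is transverse to $\mathrm{span}\,e_H$; resolving this is precisely why the auxiliary lemma on $K_{HH}^{-1/2}$ was recorded. Indeed, in the spectral decomposition \eqref{fullDiagVersion} the residual $K_{HH}^{-1}-K_{HH}^{\infty^\dagger}$ is supported on the $(n_H-3)$-dimensional ``large'' eigen-subspace of $K_{HH}$, where its eigenvalues are $\mu_i^{-1}=\cO(m^{-1})$ while $K_{HH}^{1/2}$ contributes $\mu_i^{1/2}=\cO(m^{1/2})$, producing the net product $\cO(m^{-1/2})$. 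Since $K_{HL}$, $S_\infty^{-1}$, $S^{1/2}$ are $m$-independent, both cross terms are $\cO(m^{-1/2})$ and the quadratic remainder is $\cO(m^{-1})$. Assembling these estimates gives $C_K^t B_{AGKS} C_K = I+\cO(m^{-1/2})$, from which the spectral inclusion and the condition-number bound follow at once.
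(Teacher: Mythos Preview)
Your argument is correct and is essentially the dual of the paper's own proof: both pass to a symmetric similarity of $B_{AGKS}K$ and show it equals $I+\cO(m^{-1/2})$, and both ultimately rest on the same key estimate $\|K_{HH}^{1/2}(K_{HH}^{-1}-K_{HH}^{\infty^\dagger})\|_2=\cO(m^{-1/2})$ obtained from \eqref{A_HH0.5}. The difference is which factor gets split. The paper writes $B_{AGKS}=L^tL$ with
\[
L=\left[\begin{matrix} K_{HH}(m)^{-1/2} & 0 \\ -S_\infty^{-1/2}P_{LH}^\infty & S_\infty^{-1/2}\end{matrix}\right],
\qquad P_{LH}^\infty:=K_{LH}K_{HH}^{\infty^\dagger},
\]
and computes $LKL^t=I+E$. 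This choice buys an exact cancellation: using $e_H^tK_{HH}e_H=\eta$ one finds $P_{LH}^\infty K_{HH}P_{LH}^{\infty^t}=P_{LH}^\infty K_{HL}$, so the $(2,2)$ block of $LKL^t$ is \emph{exactly} $I$ (not merely $I+\cO(m^{-1})$), and the $(1,1)$ block is trivially $I$. Only the off-diagonal $E_{LH}=S_\infty^{-1/2}K_{LH}(I-K_{HH}^{\infty^\dagger}K_{HH})K_{HH}^{-1/2}$ remains, which is precisely your quantity $K_{HH}^{-1/2}-K_{HH}^{\infty^\dagger}K_{HH}^{1/2}$ sandwiched by bounded factors. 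Your route, factoring $K$ instead, is equally valid but leaves a few more pieces to track (the $S^{1/2}S_\infty^{-1}S^{1/2}$ diagonal and the quadratic remainder), all of which you handle correctly.

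Two small remarks. First, $S^{1/2}$ is not $m$-independent; rather $S(m)=S_\infty+\cO(m^{-1})$ with $S_\infty$ fixed SPD, so $\|S(m)^{1/2}\|_2$ is bounded uniformly in $m$, which is what you actually need. Second, your spectral heuristic that the error $K_{HH}^{-1}-K_{HH}^{\infty^\dagger}$ ``is supported on the large eigen-subspace'' is only true up to $\cO(m^{-1})$ cross terms (coming from $Q(m)=Z_\infty+\cO(m^{-1})$ in \eqref{eigvecConv}); when those cross terms are hit by $\mathrm{diag}(\mu_i^{1/2})=\cO(m^{1/2})$ they still land at $\cO(m^{-1/2})$, so the conclusion stands, but in a write-up you should make that step explicit.
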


\begin{proof}
Let us factorize the preconditioner as $B_{AGKS}=L^tL$ with
\begin{equation*} 
L:= \left[ \begin{matrix} 
K_{HH}(m)^{-1/2} & 0  \\ 
-S_\infty ^{-1/2} \, P_{LH}^\infty & S_\infty ^{-1/2}
\end{matrix} \right],
\end{equation*}
where the limiting Schur complement $S(m)$ and $K_{LH} K_{HH}^{-1}$ is
denoted by $S_\infty$ and $P_{LH}^\infty$, respectively. 
We can easily show that 
\begin{equation} \label{BK}
\sigma(B_{AGKS} K) = \sigma(LKL^t) = \sigma(I + E).
\end{equation}

Note that 
\begin{equation} \label{interStep1}
P_{LH}^\infty K_{HH} P_{LH}^{\infty^t} - P_{LH}^{\infty} K_{HL} =
 K_{LH}(e_H\eta^{-1}e_H^tK_{HH}e_H\eta^{-1}e_H^t -
 e_H\eta^{-1}e_H^t)K_{HL} = 0.
\end{equation}
We give a step of the operation leading to \eqref{BK}.
Using \eqref{interStep1}, the $(2,2)$-th block entry of the $LKL^t$ reads:
\begin{equation*}
S_\infty ^{-1/2} [P_{LH}^{\infty} K_{HH} P_{LH}^{\infty^t} -
 P_{LH}^\infty K_{HL} - K_{LH}P_{LH}^{\infty^t} + K_{LL}] S_\infty ^{-1/2} = I.
\end{equation*}
The other entries of $LKL^t$ can be computed in a similar way.

Using \eqref{A_HH0.5}, we have
\begin{equation*}
E_{LH} \ = \
S_\infty ^{-1/2} K_{LH} (I_{HH} - e_H \eta^{-1} e_H^t K_{HH})
e_H \eta^{-1/2}
e_H^t + \cO(m^{-1/2}) \ = \ \cO(m^{-1/2}). \label{E_LH}
\end{equation*}
Hence $\rho(E)$, the spectral radius of $E$, is $\cO(m^{-1/2})$,
which together with (\ref{BK}) completes the proof.
\hfill \qed \\
\end{proof}

%%% ------------------------------------------------
\section{GENERALIZATION TO ELLIPTIC PDES OF ORDER $2k$} \label{sec:generalPDE}

In essence, the biharmonic plate equation preconditioner is an extension of the
construction for the diffusion equation.  It is possible to generalize
this construction to a family of elliptic PDEs of order $2k, k>2$.
We present how to obtain LRPs from associated bilinear forms.
We choose a different perspective than the one in Section \ref{sec:LRP}.
We start with a canonical bilinear form and show the modification
it needs to go through in order to construct LRPs.

Let the generalized problem be stated as follows:
Find $u \in H_0^k(\Omega)$ such that 
\begin{equation} \label{eq:superPlateEq}
T_k u := (-1)^k \, \nabla^k \left( \alpha_k \, \nabla^k u \right) = 
f \qquad \text{in}~ \Omega.
\end{equation}
The straightforward bilinear form associated to
\eqref{eq:superPlateEq} is obtained by application of Green's
formula $k$ times:
\begin{equation} \label{straightforwardBilinearForm_k}
\int_\Omega \nabla^k \, (\alpha_k \, \nabla^k u) \, v~dx = 
\int_\Omega  \alpha_k \, \nabla^k u \, \nabla^k v~dx + 
\quad \text{boundary terms}.
\end{equation}
Then, we define a bilinear form corresponding to 
\eqref{eq:superPlateEq} which can be seen as a \emph{generalization} 
of the \emph{canonical} bilinear form in \eqref{canonical}:
\begin{equation} \label{canonical_k}
\tilde{a}_k(u,v) :=  
\int_{\Omega} \alpha_k \, \nabla^k u \, \nabla^k v ~dx.
\end{equation}
Without modification, $\tilde{a}_k(\cdot, \cdot)$ cannot lead to LRPs
because $\tilde{a}_k(v,v)$ is not $H_0^k(\Omega)$-coercive.  This is
due to the fact that $\tilde{a}_k(v,v)=0$ for $v \in \mathcal{P}_{k-1}
\cap H_0^k(\Omega)$.  Hence, the stiffness matrix induced by
\eqref{canonical_k} has a large kernel involving elements from
$\mathcal{P}_{k-1}^h \cap V^h$ which indicates that extraction
of a Neumann matrix with a low-dimensional kernel is impossible.
In order to overcome this complication, we utilize a modified bilinear form:
\begin{equation*}
a_k(u,v) = \tilde{a}_k(u,v) + (1 - \sigma_k) \, \hat{a}_k(u,v).
\end{equation*}
The bilinear form should maintain the following essential properties:
\begin{enumerate}
\item $H_0^k(\Omega)$-coercive.
\item $V_{\mathcal{P}_{k-1}(\Omega)}$-coercive.
\item Corresponds to a strong formulation giving $T_k u$ in 
\eqref{eq:superPlateEq} precisely,
\end{enumerate}
where $V_{\mathcal{P}_{k-1}(\Omega)}$ is a closed subspace
such that $V_{\mathcal{P}_{k-1}}(\Omega) \cap \mathcal{P}_{k-1} = \emptyset$
and $\mathcal{P}_{k-1}$ denotes the set of polynomials of degree at most
$k-1$.

The above properties (1) and (2) will be immediately satisfied if the
generalization of \eqref{gateway} holds for the modified bilinear
form:
\begin{equation}
a_k(v,v) \geq c_k \, |v|^2_{H^k(\Omega)} \label{gateway_k}.
\end{equation}
A similar construction of the \emph{Neumann} matrix can be immediately
generalized as follows:
\begin{equation*}
\langle \cN^{(k)}_{HH} \underline{\phi}, \underline{\psi} \rangle := 
a_k(\phi_H^h,\psi_H^h).
\end{equation*}
The low-rank perturbations arise from the following decomposition of
$K_{HH}^{(k)}(m)$:
\begin{equation*}
K_{HH}^{(k)}(m) =  m \, \cN_{HH}^{(k)} + R^{(k)}, \quad 
\left( K_{HH}^{(k)}(m) \right)^{-1} = e_H^{(k)} \eta^{(k)^{-1}} e_H^{(k)^t} + 
\mathcal{O}(m^{-1}), 
\end{equation*}
where $\eta^{(k)} := e_H^{(k)^t} K_{HH}^{(k)} e_H^{(k)}$.
LRP is produced by $e_H^{(k)} \in \mathcal{P}_{k-1}^h$ because the rank is equal
to the cardinality of the basis polynomials in $\mathcal{P}_{k-1}^h$.
\begin{equation*}
\ker \cN_{HH}^{(k)} = \mathcal{P}_{k-1}^h|_{\overline{\Omega}_H}.
\end{equation*}

Due to \eqref{zeroContributionTerm}, $a_2(\cdot,\cdot)$ in
\eqref{ciarlet1} corresponds to the strong formulation $T_2$ exactly.
Let us denote the strong formulation to which $a_k(\cdot, \cdot)$
corresponds by $\hat{T}_k$.  We have $\hat{T}_k = T_k,~k=1,2$ for the
high-contrast diffusion and biharmonic plate equations, respectively:
\begin{eqnarray*}
a_1(v,v) & := & (\nabla v, \alpha_1 \, \nabla v)\\
a_2(v,v) & := & \sigma_2 \, (\nabla^2 v, \alpha_2 \, \nabla^2 v) + 
\alpha_2 \, (1 - \sigma_2) |v|_{H^2(\Omega)}^2
\end{eqnarray*}

However, for general $k$, $a_k(\cdot, \cdot)$ may not correspond to
$T_k$.  In addition, one may need more general boundary conditions if
similar zero contributions in \eqref{zeroContributionTerm} can be
obtained for general $k$.  Further research is needed to see if
such boundary conditions are physical. Currently, it is also unclear
for which applications such general PDEs can be used.  However, there
are interesting invariance theory implications when one employs
bilinear forms corresponding to rotationally invariant functions
compatible to energy definition in \eqref{plateEnergyHessian}.  This
allows a generalization of the energy notion and may be the subject
for future research.  For further information, we list the relevant
bilinear forms that are composed of rotationally invariant functions
derived by the utilization of invariance theory.
\begin{eqnarray*}
a_3(v,v) & := & \sigma_3 \, (\nabla^3 v, \alpha_3 \,\nabla^3 v) + 
\alpha_3 \, (1 - \sigma_3) |v|_{H^3(\Omega)}^2\\
a_4(v,v) & := & \sigma_4 \, (\nabla^4 v, \alpha_4 \,\nabla^4 v) + 
\alpha_4 \, (1 - \sigma_4) |v|_{H^4(\Omega)}^2 + 
\alpha_4 \, \gamma_4 |\nabla^2 v|_{H^2(\Omega)}^2.
\end{eqnarray*}
Note that the above bilinear forms satisfy \eqref{gateway_k}.

%%% ------------------------------------------------
\section{NUMERICAL EXPERIMENTS} \label{sec:numerics}

The goal of the numerical experiments is to compare the performance of
the two preconditioners: AGKS and MG. The domain is a unit square
whose coarsest level triangulation consists of $32$ triangles.  We
consider the case of a single highly-bending island located at the
region $[1/4,2/4] \times [1/4,2/4]$ consisting of 2 coarsest level
triangles.  For an extension to the case of multiple disconnected
islands, one can refer to~\cite[Sections 3 and 4]{AGKS:2007}.  The
implementation of HCT and Morley discretizations are based on
Pozrikidis' software provided in \cite{pozrikidis2005_book}.  The
problems sizes of HCT and Morley discretizations are $131$, $451$,
$1667$, $6403$ and $81$, $289$, $1089$, $4225$ for levels $1, 2, 3$
and $4$, respectively.

We denote the norm of the relative residual at iteration $i$ by $rr^{(i)}$:
\begin{equation*}
rr^{(i)} := \frac{\|r^{(i)}\|_2}{\|r^{(0)}\|_2},
\end{equation*}
where $r^{(i)}$ denotes the residual at iteration $i$ with a stopping
criterion of $rr^{(i)} \leq 10^{-7}.$ In Tables \ref{ahs}--\ref{mmg},
preconditioned conjugate gradient iteration count and the average
reduction factor are reported for combinations of preconditioner,
smoother types, and number of smoothing iterations. The average
reduction factor of the residual is defined as:
\begin{equation*}
\left( rr^{(i)}\right )^{1/i}.
\end{equation*} 
We enforce an iteration bound of $60$. If the method
seems to converge slightly beyond this bound, we denote
it by $60^+$, whereas, stalling is denoted by $\infty.$

We use Galerkin variational approach to construct the coarser level
algebraic systems. The multigrid preconditioner MG is derived from the
implementation by Aksoylu, Bond, and Holst~\cite{AkBoHo03}.  We
employ a V(1,1)-cycle with point symmetric Gauss-Seidel (sGS) and
point Gauss-Seidel (GS) smoothers.  A direct solver is used for the
coarsest level.  

By exploiting the fact that $S_\infty$ in \eqref{limitingS} is
only a LRP of $K_{LL}$, we can build robust
preconditioners for $S_\infty$ in \eqref{MainPrec1} via standard
multigrid preconditioners.  \eqref{limitingS} implies that
\begin{equation*}
S_\infty = K_{LL} - v \eta^{-1}v^T,
\end{equation*}
where $v := K_{LH} e_H$. If $M_{LL}$ denotes a
standard multigrid V-cycle for $K_{LL}$, we can construct an
efficient and robust preconditioner $\tilde{S}^{-1}$ for $S_\infty$
using the Sherman-Morrison-Woodbury formula, i.e.
\begin{equation} \label{Sherman-Morrison}
\tilde{S}^{-1} \ := \ M_{LL} \ + \ 
M_{LL} v ~(\eta - v^T M_{LL} v)^{-1} \, v^T M_{LL}.
\end{equation}
Note also that we can precompute and store $M_{LL} v$ during the setup
phase.  This means that we only need to apply the multigrid V-cycle
$M_{LL}$ once per iteration.  Therefore, the following practical
version of preconditioner \eqref{MainPrec1} is used in the
implementation:
\begin{eqnarray}
\tilde{B}_{AGKS} & := &
{ \left[ \begin{array}{cc}  I_{HH} & - K_{HH}^{\infty^\dagger} K_{HL} \\
0 & I_{LL}\end{array}\right]}
{ \left[ \begin{array}{cc}    M_{HH} & 0 \\
0&  \tilde{S}^{-1} \end{array}\right]} 
{ \left[ \begin{array}{cc}  I_{HH} & 0 \\  
- K_{LH}K_{HH}^{\infty^\dagger} & I_{LL}
\end{array}\right]} \label{practicalAGKS}.
\end{eqnarray}
We construct two different multilevel hierarchies for
multigrid preconditioners $M_{HH}$ in \eqref{practicalAGKS} and
$M_{LL}$ in \eqref{Sherman-Morrison} for DOF corresponding to
$\Omega_H$ and $\Omega_L$, respectively.  For prolongation, linear
interpolation is used as in \cite{Braess.D;Peisker.P1987}.
The prolongation matrices $P_{HH}$ and $P_{LL}$ are extracted from the
prolongation matrix for whole domain $\Omega$ in the fashion following
\eqref{2x2blockSys}:
\begin{equation*} \label{2x2blockSysProlongation}
P =  
\left[
\begin{array}{ll}
P_{HH} & P_{HL} \\ P_{LH} & P_{LL}
\end{array}
\right].
\end{equation*}

{\setlength\arraycolsep{0.15em}
\begin{table*}
{\footnotesize
% table caption is above the table
\caption{AGKS + HCT + sGS + smooth number 1-5-10}
\label{ahs}      

$$\begin{array}{lrrrrrrrrr}
  \hline \\
  N \backslash m & 10^0 & 10^1 & 10^2 & 10^3 & 10^4 & 10^5 &
  10^7 & 10^9 & 10^{10} \\[1ex]
  \hline\hline
  &&&&\textbf{smooth}&\textbf{number}& =~1~~~~~~~&&& \\[1ex]
  \hline 
  \mathbf{131}& \mathbf{24}, 0.485 & \mathbf{20}, 0.447 & 
  \mathbf{18}, 0.407 & \mathbf{17}, 0.371 & \mathbf{17},
  0.381  & \mathbf{16}, 0.337 & \mathbf{18}, 0.371 & \mathbf{16}, 0.362 &
  \mathbf{17}, 0.384  \\[1ex]
  \mathbf{451} & \mathbf{52}, 0.730 & \mathbf{38}, 0.650  &
  \mathbf{21}, 0.452 & \mathbf{13}, 0.286 & \mathbf{12}, 0.249
  & \mathbf{12}, 0.256 & \mathbf{13}, 0.279 & \mathbf{12}, 0.253 & \mathbf{11}, 
  0.213  \\[1ex]
  \mathbf{1667} & \mathbf{60}^+, 0.857 & \mathbf{60}^+,0.768  & 
  \mathbf{33}, 0.610 & \mathbf{20}, 0.426 & \mathbf{18},
  0.401  & \mathbf{19}, 0.410 & \mathbf{21}, 0.447 & \mathbf{19},
  0.420  & \mathbf{19}, 0.417  \\[1ex]
  \mathbf{6403}  & \boldsymbol{\infty}, 0.972 & \mathbf{60}^+, 0.930 & 
  \mathbf{60}^+, 0.839 & \mathbf{45}, 0.692 & \mathbf{37}, 0.637
  & \mathbf{36}, 0.636 & \mathbf{36}, 0.638 & \mathbf{36}, 0.635 & 
  \mathbf{39}, 0.661   \\[1ex]
  \hline 
  &&&&\textbf{smooth}&\textbf{number}& =~5~~~~~&&& \\[1ex]
  \hline 
  \mathbf{131}& \mathbf{24}, 0.485 & \mathbf{20}, 0.447 & 
  \mathbf{18}, 0.407 & \mathbf{17}, 0.371 & \mathbf{17},
  0.381  & \mathbf{16}, 0.337 & \mathbf{18}, 0.371 & \mathbf{16}, 0.362 &
  \mathbf{17}, 0.384  \\[1ex]
  \mathbf{451} & \mathbf{40}, 0.664 & \mathbf{28},  0.547&
  \mathbf{15}, 0.330 & \mathbf{8}, 0.131 & \mathbf{6}, 0.054
  & \mathbf{6}, 0.023 & \mathbf{4}, 0.014 & \mathbf{4}, 0.016 & \mathbf{4}, 
  0.012 \\[1ex]
  \mathbf{1667} & \mathbf{60}^+, 0.786 & \mathbf{48}, 0.706& 
  \mathbf{24}, 0.490 & \mathbf{12}, 0.258 & \mathbf{8},
  0.091  & \mathbf{6}, 0.058 & \mathbf{5}, 0.035 & \mathbf{5},
  0.026  & \mathbf{5}, 0.024   \\[1ex]
  \mathbf{6403}& \mathbf{60}^+, 0.947 & \mathbf{60}^+, 0.862
  & \mathbf{43}, 0.682  &  \mathbf{21}, 0.427
  & \mathbf{12}, 0.223 & \mathbf{8}, 0.091 & \mathbf{6}, 0.051
  & \mathbf{6}, 0.052& \mathbf{6}, 0.062  \\[1ex]
  \hline 
  &&&&\textbf{smooth}&\textbf{number}&=~10~~~~~&&& \\[1ex]
  \hline 
  \mathbf{131}& \mathbf{24}, 0.485 & \mathbf{20}, 0.447 & 
  \mathbf{18}, 0.407 & \mathbf{17}, 0.371 & \mathbf{17},
  0.381  & \mathbf{16}, 0.337 & \mathbf{18}, 0.371 & \mathbf{16}, 0.362 &
  \mathbf{17}, 0.384  \\[1ex]
  \mathbf{451} & \mathbf{37}, 0.634 & \mathbf{26}, 0.528&
  \mathbf{15}, 0.330 & \mathbf{8}, 0.131 & \mathbf{6}, 0.050
  & \mathbf{6}, 0.017 & \mathbf{4}, 0.010 & \mathbf{3}, 0.004 & \mathbf{3},
  0.003   \\[1ex]
  \mathbf{1667} & \mathbf{60}^+, 0.785 & \mathbf{43}, 0.680&
  \mathbf{20}, 0.442 & \mathbf{12}, 0.213 & \mathbf{8},
  0.080  & \mathbf{6}, 0.030 & \mathbf{4}, 0.004 & \mathbf{4},0.002
  & \mathbf{4}, 0.008   \\[1ex]
  \mathbf{6403}& \mathbf{60}^+,  0.943 & \mathbf{60}^+, 0.861 
  & \mathbf{38}, 0.653  & \mathbf{20}, 0.410
  & \mathbf{10}, 0.177 & \mathbf{8}, 0.090 & \mathbf{5}, 0.028
  &\mathbf{5}, 0.015  &\mathbf{5},0.023  \\[1ex]
  \hline
\end{array}$$
}
\end{table*}

\begin{table*}
{\footnotesize
% table caption is above the table
\caption{AGKS + HCT + GS + smooth number 1-5-10}
\label{ahg}      
$$\begin{array}{lrrrrrrrrrr}
  \hline \\
  N \backslash m & 10^0 & 10^1 & 10^2 & 10^3 & 10^4 & 10^5 &
  10^7 & 10^9 & 10^{10} \\[1ex]
  \hline\hline
  &&&&\textbf{smooth}&\textbf{number}& =~1~~~~~~~&&& \\[1ex]
  \hline
  \mathbf{131}& \mathbf{24}, 0.485 & \mathbf{20}, 0.447 & 
  \mathbf{18}, 0.407 & \mathbf{17}, 0.371 & \mathbf{17},
  0.381  & \mathbf{16}, 0.337 & \mathbf{18}, 0.371 & \mathbf{16}, 0.362 &
  \mathbf{17}, 0.384  \\[1ex]
  \mathbf{451}  & \mathbf{57}, 0.749 & \mathbf{49},0.720 & 
  \mathbf{27}, 0.538 & \mathbf{23}, 0.494 & \mathbf{22},
  0.459  & \mathbf{23}, 0.480 & \mathbf{26}, 0.535 & \mathbf{24},
  0.490  & \mathbf{25}, 0.517  \\[1ex]
  \mathbf{1667}  & \mathbf{60}^+, 0.918 & \mathbf{60}^+, 0.880  &
  \mathbf{60}^+, 0.872 &\mathbf{60}^+, 0.847  &\mathbf{60}^+, 0.853
  & \mathbf{60}^+, 0.820 & \mathbf{60}^+, 0.871 & \mathbf{60}^+, 0.881 
  &\mathbf{60}^+, 0.814  \\[1ex]
  \mathbf{6403} & \boldsymbol{\infty}, 1.001 & \boldsymbol{\infty}, 0.991 & 
  \boldsymbol{\infty}, 0.958  & \boldsymbol{\infty}, 0.953  &
  \boldsymbol{\infty}, 0.964   &\boldsymbol{\infty}, 0.971  &  
  \boldsymbol{\infty}, 0.980 &   \boldsymbol{\infty}, 0.977  &
  \boldsymbol{\infty}, 0.985   \\[1ex]
  \hline
  &&&&\textbf{smooth}&\textbf{number}& =~5~~~~~&&& \\[1ex]
  \hline
  \mathbf{131}& \mathbf{24}, 0.485 & \mathbf{20}, 0.447 & 
  \mathbf{18}, 0.407 & \mathbf{17}, 0.371 & \mathbf{17},
  0.381  & \mathbf{16}, 0.337 & \mathbf{18}, 0.371 & \mathbf{16}, 0.362 &
  \mathbf{17}, 0.384  \\[1ex]
  \mathbf{451} & \mathbf{37}, 0.644 & \mathbf{26}, 0.538&
  \mathbf{16}, 0.339 & \mathbf{8}, 0.133 & \mathbf{6}, 0.053
  & \mathbf{6}, 0.024 & \mathbf{4}, 0.011 & \mathbf{4}, 0.010 & \mathbf{4}, 
  0.014   \\[1ex]
  \mathbf{1667} & \mathbf{60}^+, 0.786 & \mathbf{48}, 0.706 & 
  \mathbf{23}, 0.494 & \mathbf{12}, 0.253 & \mathbf{8},
  0.106  & \mathbf{6}, 0.060& \mathbf{5}, 0.022& \mathbf{5}, 0.022 & 
  \mathbf{5},  0.027  \\[1ex]
  \mathbf{6403} &\mathbf{60}^+, 0.947 & \mathbf{60}^+, 0.887&
  \mathbf{50}, 0.724 &  \mathbf{22}, 0.480 & \mathbf{12}, 0.253 & \mathbf{9},
  0.141 &  \mathbf{10}, 0.185& \mathbf{9}, 0.138 & \mathbf{10},0.163   \\[1ex]
  \hline
  &&&&\textbf{smooth}&\textbf{number}&=~10~~~~~&&& \\[1ex]
  \hline
  \mathbf{131}& \mathbf{24}, 0.485 & \mathbf{20}, 0.447 & 
  \mathbf{18}, 0.407 & \mathbf{17}, 0.371 & \mathbf{17},
  0.381  & \mathbf{16}, 0.337 & \mathbf{18}, 0.371 & \mathbf{16}, 0.362 &
  \mathbf{17}, 0.384  \\[1ex]
  \mathbf{451} & \mathbf{37}, 0.637 & \mathbf{25}, 0.525&
  \mathbf{14}, 0.312 & \mathbf{8}, 0.131 & \mathbf{6}, 0.050
  & \mathbf{6}, 0.016 & \mathbf{4}, 0.002 & \mathbf{3}, 0.004 & \mathbf{3}, 
  0.002  \\[1ex]
  \mathbf{1667} & \mathbf{60}^+, 0.785 & \mathbf{43}, 0.680 & 
  \mathbf{20}, 0.442 & \mathbf{12}, 0.213 & \mathbf{8},
  0.080  & \mathbf{6}, 0.029& \mathbf{4}, 0.005 & \mathbf{4},
  0.002  & \mathbf{4}, 0.006 \\[1ex]
  \mathbf{6403} & \mathbf{60}^+, 0.946 & \mathbf{60}^+, 0.861 & \mathbf{45},
 0.696 & \mathbf{20}, 0.410 & \mathbf{10}, 0.196 & \mathbf{8}, 0.085 &
 \mathbf{6}, 0.052 & \mathbf{6}, 0.033 & \mathbf{6},0.040  \\[1ex]
  \hline
\end{array}$$
}
\end{table*}

\begin{table*}
{\footnotesize
% table caption is above the table
\caption{AGKS + Morley +  sGS + smooth number 1-5-10}
\label{ams}      
$$\begin{array}{lrrrrrrrrrr}
  \hline \\
  N \backslash m & 10^0 & 10^1 & 10^2 & 10^3 & 10^4 & 10^5 &
  10^7 & 10^9 & 10^{10}\\[1ex]
  \hline\hline
  &&&&\textbf{smooth}&\textbf{number}& =~1~~~~~~~&&& \\[1ex]
  \hline
  \mathbf{81}& \mathbf{9}, 0.119 & \mathbf{7}, 0.066 & 
  \mathbf{5}, 0.040 & \mathbf{4}, 0.015 & \mathbf{4},
  0.005  & \mathbf{4}, 0.002 & \mathbf{2}, 0.0003 & \mathbf{2},
  0.0003 & \mathbf{3}, 0.0002\\[1ex]
  \mathbf{289} & \mathbf{15}, 0.316 & \mathbf{10}, 0.189&
  \mathbf{8}, 0.080 & \mathbf{6}, 0.030 & \mathbf{4}, 0.012
  & \mathbf{4}, 0.004 & \mathbf{3}, 0.004 & \mathbf{3}, 0.001 & \mathbf{3}, 
  0.0004   \\[1ex]
  \mathbf{1089} & \mathbf{25}, 0.519 & \mathbf{17}, 0.367   & 
  \mathbf{10}, 0.182 & \mathbf{8}, 0.074 & \mathbf{6},
  0.027  & \mathbf{4}, 0.015& \mathbf{4}, 0.005 & \mathbf{4},
  0.003  & \mathbf{4}, 0.003\\[1ex]
  \mathbf{4225} & \mathbf{52}, 0.733 &  \mathbf{34}, 0.619
  &  \mathbf{18}, 0.358 &  \mathbf{10}, 0.173
  & \mathbf{7}, 0.084 &  \mathbf{6}, 0.044 & \mathbf{6}, 0.035
  &  \mathbf{6}, 0.043 & \mathbf{6}, 0.036  \\[1ex]
  \hline
  &&&&\textbf{smooth}&\textbf{number}& =~5~~~~~&&& \\[1ex]
  \hline
  \mathbf{81}& \mathbf{9}, 0.119 & \mathbf{7}, 0.066 & 
  \mathbf{5}, 0.040 & \mathbf{4}, 0.015 & \mathbf{4},
  0.005  & \mathbf{4}, 0.002 & \mathbf{2}, 0.0003 & \mathbf{2},
  0.0003 & \mathbf{3}, 0.0002\\[1ex]
  \mathbf{289} & \mathbf{15}, 0.316 & \mathbf{10}, 0.189&
  \mathbf{8}, 0.080 & \mathbf{6}, 0.031 & \mathbf{4}, 0.012
  & \mathbf{4}, 0.004 & \mathbf{3}, 0.004 & \mathbf{2}, 0.0003 & 
  \mathbf{3}, 0.0004   \\[1ex]
  \mathbf{1089} & \mathbf{25}, 0.514 & \mathbf{17}, 0.363   & 
  \mathbf{10}, 0.181 & \mathbf{8}, 0.074 & \mathbf{6},
  0.024  & \mathbf{4}, 0.009& \mathbf{4}, 0.001 & \mathbf{3},
  0.002  & \mathbf{3}, 0.001 \\[1ex]
  \mathbf{4225} & \mathbf{46}, 0.698  & \mathbf{27}, 0.546 & \mathbf{16},
  0.315 & \mathbf{10}, 0.152 & \mathbf{6}, 0.057 & \mathbf{6}, 0.018 & 
  \mathbf{4},  0.003 & \mathbf{4}, 0.002& \mathbf{3}, 0.004  \\[1ex]
  \hline
  &&&&\textbf{smooth}&\textbf{number}&=~10~~~~~&&& \\[1ex]
  \hline
  \mathbf{81}& \mathbf{9}, 0.119 & \mathbf{7}, 0.066 & 
  \mathbf{5}, 0.040 & \mathbf{4}, 0.015 & \mathbf{4},
  0.005  & \mathbf{4}, 0.002 & \mathbf{2}, 0.0003 & \mathbf{2},
  0.0003 & \mathbf{3}, 0.0002\\[1ex]
  \mathbf{289} & \mathbf{15}, 0.316 & \mathbf{10}, 0.189&
  \mathbf{8}, 0.080 & \mathbf{6}, 0.031 & \mathbf{4}, 0.012
  & \mathbf{4}, 0.004 & \mathbf{3}, 0.004 & \mathbf{2}, 0.0002 & 
  \mathbf{3}, 0.0004   \\[1ex]
  \mathbf{1089} & \mathbf{25}, 0.514 & \mathbf{17}, 0.363   & 
  \mathbf{10}, 0.181 & \mathbf{8}, 0.074 & \mathbf{6},
  0.024  & \mathbf{4}, 0.009& \mathbf{4}, 0.001 & \mathbf{3},
  0.002  & \mathbf{3}, 0.001\\[1ex]
  \mathbf{4225} &\mathbf{46}, 0.698 &\mathbf{27}, 0.546
  & \mathbf{16}, 0.315 &\mathbf{10}, 0.151 & \mathbf{6}, 0.057 & \mathbf{6}, 
  0.018 & \mathbf{4}, 0.003 & \mathbf{4}, 0.002& \mathbf{4}, 0.004 \\[1ex]
  \hline
\end{array}$$
}
\end{table*}

\begin{table*}
{\footnotesize
% table caption is above the table
\caption{AGKS + Morley + GS + smooth number 1-5-10}
\label{amg}      
$$\begin{array}{lrrrrrrrrrrr}
  \hline \\
  N \backslash m & 10^0 & 10^1 & 10^2 & 10^3 & 10^4 & 10^5 &
  10^7 & 10^9 & 10^{10}  \\[1ex]
  \hline\hline
  &&&&\textbf{smooth}&\textbf{number}& =~1~~~~~~~&&& \\[1ex]
  \hline
  \mathbf{81}& \mathbf{9}, 0.119 & \mathbf{7}, 0.066 & 
  \mathbf{5}, 0.040 & \mathbf{4}, 0.015 & \mathbf{4},
  0.005  & \mathbf{4}, 0.002 & \mathbf{2}, 0.0003 & \mathbf{2},
  0.0003 & \mathbf{3}, 0.0002\\[1ex]
  \mathbf{289} & \mathbf{15}, 0.329 & \mathbf{10}, 0.189&
  \mathbf{8}, 0.080 & \mathbf{6}, 0.031 & \mathbf{4}, 0.012
  & \mathbf{4}, 0.006 & \mathbf{3}, 0.005 & \mathbf{3}, 0.003 & 
  \mathbf{3}, 0.003   \\[1ex]
  \mathbf{1089} & \mathbf{28}, 0.550 & \mathbf{19}, 0.402  & 
  \mathbf{10}, 0.192 & \mathbf{8}, 0.085 & \mathbf{6},
  0.043  & \mathbf{5}, 0.040& \mathbf{5}, 0.037 & \mathbf{6},
  0.030 & \mathbf{5}, 0.039  \\[1ex]
  \mathbf{4225}  & \mathbf{59}, 0.760 & \mathbf{38}, 0.651 & 
  \mathbf{20}, 0.433 & \mathbf{12}, 0.249 & \mathbf{11},
  0.204  & \mathbf{11}, 0.214& \mathbf{11}, 0.208 & \mathbf{11},
  0.210 & \mathbf{11}, 0.206 \\[1ex]
  \hline
  &&&&\textbf{smooth}&\textbf{number}& =~5~~~~~&&& \\[1ex]
  \hline
  \mathbf{81}& \mathbf{9}, 0.119 & \mathbf{7}, 0.066 & 
  \mathbf{5}, 0.040 & \mathbf{4}, 0.015 & \mathbf{4},
  0.005  & \mathbf{4}, 0.002 & \mathbf{2}, 0.0003 & \mathbf{2},
  0.0003 & \mathbf{3}, 0.0002\\[1ex]
  \mathbf{289} & \mathbf{15}, 0.316 & \mathbf{10}, 0.189&
  \mathbf{8}, 0.080 & \mathbf{6}, 0.031 & \mathbf{4}, 0.012
  & \mathbf{4}, 0.004 & \mathbf{3}, 0.004 & \mathbf{2}, 0.0002 & 
  \mathbf{3}, 0.0004   \\[1ex]
  \mathbf{1089} & \mathbf{25}, 0.514 & \mathbf{17}, 0.363&
  \mathbf{10}, 0.181 & \mathbf{8}, 0.074 & \mathbf{6},
  0.024  & \mathbf{4}, 0.009& \mathbf{4}, 0.001  & \mathbf{3}, 0.002 & 
  \mathbf{3},  0.001 \\[1ex]
  \mathbf{4225} &\mathbf{46}, 0.698 &\mathbf{27}, 0.0546
  & \mathbf{16}, 0.315 &\mathbf{10}, 0.152 & \mathbf{6}, 0.057 & \mathbf{6},
  0.018 & \mathbf{4}, 0.003 & \mathbf{4}, 0.002& \mathbf{3}, 0.004 \\[1ex]
  \hline
  &&&&\textbf{smooth}&\textbf{number}&=~10~~~~~&&& \\[1ex]
  \hline
  \mathbf{81}& \mathbf{9}, 0.119 & \mathbf{7}, 0.066 & 
  \mathbf{5}, 0.040 & \mathbf{4}, 0.015 & \mathbf{4},
  0.005  & \mathbf{4}, 0.002 & \mathbf{2}, 0.0003 & \mathbf{2},
  0.0003 & \mathbf{3}, 0.0002\\[1ex]
  \mathbf{289} & \mathbf{15}, 0.316 & \mathbf{10}, 0.189&
  \mathbf{8}, 0.080 & \mathbf{6}, 0.031 & \mathbf{4}, 0.012
  & \mathbf{4}, 0.004 & \mathbf{3}, 0.004 & \mathbf{2}, 0.0001 & 
  \mathbf{3}, 0.0004   \\[1ex]
  \mathbf{1089} & \mathbf{25}, 0.514 & \mathbf{17}, 0.363&
  \mathbf{10}, 0.181 & \mathbf{8}, 0.074 & \mathbf{6},
  0.024  & \mathbf{4}, 0.009& \mathbf{4}, 0.001  & \mathbf{3}, 0.002 & 
  \mathbf{3},  0.001\\[1ex]
  \mathbf{4225} &\mathbf{46}, 0.698 &\mathbf{27}, 0.0546
  & \mathbf{16}, 0.315 &\mathbf{10}, 0.152 & \mathbf{6}, 0.057 & \mathbf{6},
  0.018 & \mathbf{4}, 0.003 & \mathbf{4}, 0.002& \mathbf{3}, 0.003 \\[1ex]
  \hline
\end{array}$$
}
\end{table*}

\begin{table*}
{\footnotesize
% table caption is above the table
\caption{MG + HCT + sGS + smooth number 1-5-10}
\label{mhs}      
$$\begin{array}{lrrrrrrrrrrrr}
  \hline \\
  N \backslash m & 10^0 & 10^1 & 10^2 & 10^4 & 10^5 & 10^6 & 10^7 &
  10^8 & 10^9\\[1ex]
  \hline\hline \\
  \hline
  &&&&\textbf{smooth}&\textbf{number}& =~1~~~~~&&& \\[1ex]
  \hline
  \mathbf{131}& \mathbf{60}^+, 0.885 & \mathbf{60}^+, 0.898  & \mathbf{60}^+, 0.932 &
  \boldsymbol{\infty}, 0.988 &  \boldsymbol{\infty},
  0.997 & \boldsymbol{\infty}, 1.075 & \boldsymbol{\infty}, 1.089 &
  \boldsymbol{\infty}, 1.065 & \boldsymbol{\infty}, 1.137 \\[1ex]
  \mathbf{451}  &  \boldsymbol{\infty}, 0.963 & \boldsymbol{\infty}, 0.987 &
\boldsymbol{\infty}   1.014 &  \boldsymbol{\infty}, 1.050&\boldsymbol{\infty},
  1.086 & \boldsymbol{\infty}, 1.106& \boldsymbol{\infty}, 1.172& \boldsymbol{\infty},
  1.081&\boldsymbol{\infty}, 1.091\\[1ex]
  \mathbf{1667}  &  \boldsymbol{\infty}, 0.985& \boldsymbol{\infty}, 1.015&
\boldsymbol{\infty}, 1.044&  \boldsymbol{\infty}, 1.062& \boldsymbol{\infty},
  1.122 & \boldsymbol{\infty}, 1.109 & \boldsymbol{\infty}, 1.142& \boldsymbol{\infty},
  1.170 &\boldsymbol{\infty}, 1.124\\[1ex]
  \mathbf{6403}  &  \boldsymbol{\infty}, 1.025 & \boldsymbol{\infty}, 1.040 & 
  \boldsymbol{\infty}, 1.057 & \boldsymbol{\infty}, 1.125 & \boldsymbol{\infty}, 1.145
  & \boldsymbol{\infty}, 1.130 & \boldsymbol{\infty}, 1.171 & \boldsymbol{\infty}, 1.112
  & \boldsymbol{\infty}, 1.187  \\[1ex]
  \hline
 &&&&\textbf{smooth}&\textbf{number}& =~5~~~~~&&& \\[1ex]
  \hline
  \mathbf{131}& \mathbf{60}^+, 0.885 & \mathbf{60}^+, 0.898  & \mathbf{60}^+, 0.932
  &  \boldsymbol{\infty}, 0.988 &  
  \boldsymbol{\infty}, 0.997 & \boldsymbol{\infty}, 1.075 & \boldsymbol{\infty},
  1.089 & \boldsymbol{\infty}, 1.065 & \boldsymbol{\infty}, 1.137 \\[1ex]
  \mathbf{451} & \mathbf{60}^+, 0.761 & \mathbf{60}^+, 0.829 & \mathbf{60}^+, 0.920 &  
  \boldsymbol{\infty}, 1.070 & \boldsymbol{\infty}, 1.084 &
  \boldsymbol{\infty}, 1.120& \boldsymbol{\infty}, 1.174 &  \boldsymbol{\infty}, 1.118& 
  \boldsymbol{\infty}, 1.166 \\[1ex]
  \mathbf{1667}& \mathbf{60}^+, 0.854 & \mathbf{60}^+, 0.923 & \boldsymbol{\infty}, 0.999
 & \boldsymbol{\infty}, 1.038 & \boldsymbol{\infty}, 1.0037 &
  \boldsymbol{\infty}, 1.0085 & \boldsymbol{\infty}, 1.134 &  \boldsymbol{\infty}, 1.154
  & \boldsymbol{\infty}, 1.208\\[1ex]
  \mathbf{6403}& \mathbf{60}^+, 0.931 & \boldsymbol{\infty}, 0.979 & \boldsymbol{\infty},
0.998 &  \boldsymbol{\infty},1.012 & \boldsymbol{\infty}, 1.023 &
  \boldsymbol{\infty}, 1.058 & \boldsymbol{\infty}, 1.041 &  \boldsymbol{\infty}, 1.063
  & \boldsymbol{\infty}, 1.099 \\[1ex]
  \hline
  &&&&\textbf{smooth}&\textbf{number}& =~10~~~~~&&& \\[1ex]
  \hline
  \mathbf{131}& \mathbf{60}^+, 0.885 & \mathbf{60}^+, 0.898  & \mathbf{60}^+, 0.932
  &  \boldsymbol{\infty}, 0.988 &  
  \boldsymbol{\infty}, 0.997 & \boldsymbol{\infty}, 1.075 & \boldsymbol{\infty},
  1.089 & \boldsymbol{\infty}, 1.065 & \boldsymbol{\infty}, 1.137 \\[1ex]
  \mathbf{451} & \mathbf{48}, 0.660 & \mathbf{53}, 0.701 & \mathbf{60}^+, 0.825 &  
  \boldsymbol{\infty}, 0.955& \boldsymbol{\infty}, 1.032 &
  \boldsymbol{\infty}, 1.115& \boldsymbol{\infty}, 1.179&  \boldsymbol{\infty}, 1.200& 
  \boldsymbol{\infty}, 1.196 \\[1ex]
  \mathbf{1667}& \mathbf{40}, 0.624 & \mathbf{49}, 0.680 & \mathbf{60}^+, 0.797 &  
  \boldsymbol{\infty}, 1.001& \boldsymbol{\infty}, 1.088 &
  \boldsymbol{\infty}, 1.035& \boldsymbol{\infty}, 1.064 &  \boldsymbol{\infty}, 1.052
  & \boldsymbol{\infty}, 1.095\\[1ex]
  \mathbf{6403}& \mathbf{60}^+, 0.890 & \mathbf{60}^+, 0.929 & \boldsymbol{\infty},
0.972 &  \boldsymbol{\infty},1.049 & \boldsymbol{\infty}, 1.017 &
  \boldsymbol{\infty}, 1.052& \boldsymbol{\infty}, 1.051 &  \boldsymbol{\infty}, 1.134
  & \boldsymbol{\infty}, 1.170 \\[1ex]
  \hline
  
\end{array}$$
}
\end{table*}

\begin{table*}
{\footnotesize
% table caption is above the table
\caption{MG + HCT + GS + smooth number 1-5-10}
\label{mhg}      
$$\begin{array}{lrrrrrrrrr}
  \hline \\
  N \backslash m & 10^0 & 10^1 & 10^2  & 10^4 & 10^5 & 10^6 & 10^7 &
  10^8 & 10^9 \\[1ex]
  \hline\hline \\
  \hline
  &&&&\textbf{smooth}&\textbf{number}& =~1~~~~~&&& \\[1ex]
  \hline
  \mathbf{131}& \mathbf{60}^+, 0.885 & \mathbf{60}^+, 0.898  & \mathbf{60}^+, 0.932 & 
  \boldsymbol{\infty}, 0.988 &  \boldsymbol{\infty},
  0.997 & \boldsymbol{\infty}, 1.075 & \boldsymbol{\infty}, 1.089 &
  \boldsymbol{\infty}, 1.065 & \boldsymbol{\infty}, 1.137 \\[1ex]
  \mathbf{451}  & \boldsymbol{\infty}, 0.995 & \boldsymbol{\infty}, 1.016 & 
  \boldsymbol{\infty}, 1.029 &  \boldsymbol{\infty}, 
  1.041 & \boldsymbol{\infty}, 1.107  & \boldsymbol{\infty}, 1.114
  & \boldsymbol{\infty}, 1.128 & \boldsymbol{\infty},1.184  & \boldsymbol{\infty}, 
  1.205 \\[1ex]
  \mathbf{1667}   & \boldsymbol{\infty}, 1.033 & \boldsymbol{\infty}, 1.034 & 
  \boldsymbol{\infty}, 1.042&  \boldsymbol{\infty}, 1.077
  & \boldsymbol{\infty}, 1.115 & \boldsymbol{\infty}, 1.155  & \boldsymbol{\infty}, 
  1.068 & \boldsymbol{\infty}, 1.068 & \boldsymbol{\infty}, 1.079   \\[1ex]
  \mathbf{6403}  &  \boldsymbol{\infty}, 1.055 & \boldsymbol{\infty}, 1.052&
  \boldsymbol{\infty}, 1.057&  \boldsymbol{\infty}, 1.070 & \boldsymbol{\infty},
  1.146& \boldsymbol{\infty}, 1.141& \boldsymbol{\infty}, 1.146 & \boldsymbol{\infty},
  1.115& \boldsymbol{\infty}, 1.160\\[1ex]
  \hline
  &&&&\textbf{smooth}&\textbf{number}& =~5~~~~~&&& \\[1ex]
  \hline
  \mathbf{131}& \mathbf{60}^+, 0.885 & \mathbf{60}^+, 0.898  & \mathbf{60}^+, 0.932 &
  \boldsymbol{\infty}, 0.988 &  \boldsymbol{\infty},
  0.997 & \boldsymbol{\infty}, 1.075 & \boldsymbol{\infty}, 1.089 & 
  \boldsymbol{\infty}, 1.065 & \boldsymbol{\infty}, 1.137 \\[1ex]
  \mathbf{451} & \mathbf{59}, 0.760 & \mathbf{60}^+, 0.828 & \mathbf{60}^+, 0.947  & 
  \boldsymbol{\infty}, 1.043& \boldsymbol{\infty}, 1.061 & \boldsymbol{\infty}
  , 1.121 & \boldsymbol{\infty}, 1.149 & \boldsymbol{\infty}, 1.176 &
  \boldsymbol{\infty}, 1.189  \\[1ex]
  \mathbf{1667} & \mathbf{60}^+, 0.857 & \mathbf{60}^+, 0.904 &  \boldsymbol{\infty}, 
  1.003 & \boldsymbol{\infty}, 1.033 & \boldsymbol{\infty},
  1.056& \boldsymbol{\infty}, 1.101 & \boldsymbol{\infty}, 1.116 & 
  \boldsymbol{\infty}, 1.152& \boldsymbol{\infty}, 1.176 \\[1ex]
  \mathbf{6403} & \boldsymbol{\infty}, 0.961 &  \boldsymbol{\infty}, 1.003 &\boldsymbol{\infty}, 1.037 & \boldsymbol{\infty}, 1.072 & \boldsymbol{\infty}, 1.084
  & \boldsymbol{\infty}, 1.103  & \boldsymbol{\infty}, 1.105 & \boldsymbol{\infty},
  1.115  & \boldsymbol{\infty}, 1.122 \\[1ex]
  \hline
  &&&&\textbf{smooth}&\textbf{number}& =~10~~~~~&&& \\[1ex]
  \hline
  \mathbf{131}& \mathbf{60}^+, 0.885 & \mathbf{60}^+, 0.898  & \mathbf{60}^+, 
  0.932 &
  \boldsymbol{\infty}, 0.988 &  \boldsymbol{\infty},
  0.997 & \boldsymbol{\infty}, 1.075 & \boldsymbol{\infty}, 1.089 & 
  \boldsymbol{\infty}, 1.065 & \boldsymbol{\infty}, 1.137 \\[1ex]
  \mathbf{451} & \mathbf{37}, 0.632 & \mathbf{42}, 0.667& \mathbf{60}^+, 0.778& 
  \boldsymbol{\infty}, 1.060& \boldsymbol{\infty}, 1.081& \boldsymbol{\infty}
  , 1.103 & \boldsymbol{\infty}, 1.161 & \boldsymbol{\infty}, 1.194&
  \boldsymbol{\infty}, 1.200  \\[1ex]
  \mathbf{1667} & \mathbf{47}, 0.706 & \mathbf{57}, 0.753& \mathbf{60}^+, 0.853 &
  \boldsymbol{\infty}, 1.048 & \boldsymbol{\infty},
  1.022& \boldsymbol{\infty}, 1.040 & \boldsymbol{\infty}, 1.072 & 
  \boldsymbol{\infty}, 1.114& \boldsymbol{\infty}, 1.149 \\[1ex]
  \mathbf{6403} & \mathbf{60}^+, 0.924 & \mathbf{60}^+, 0.949 
  &\boldsymbol{\infty}, 1.008& \boldsymbol{\infty}, 1.028& \boldsymbol{\infty}, 
  1.033  & \boldsymbol{\infty}, 1.048 & \boldsymbol{\infty}, 1.037& 
  \boldsymbol{\infty},1.052  & \boldsymbol{\infty}, 1.069 \\[1ex]
  \hline
\end{array}$$
}
\end{table*}

\begin{table*}
{\footnotesize
% table caption is above the table
\caption{MG + Morley + sGS + smooth number 1-5-10}
\label{mms}      
$$\begin{array}{lrrrrrrrrr}
  \hline \\
  N \backslash m & 10^0 & 10^1 & 10^2  & 10^4 & 10^5 & 10^6 & 10^7 &
  10^8 & 10^9 \\[1ex]
  \hline\hline \\
  \hline
  &&&&\textbf{smooth}&\textbf{number}& =~1~~~~~&&& \\[1ex]
  \hline
  \mathbf{81}& \mathbf{38}, 0.652& \mathbf{45}, 0.694 & \mathbf{54}, 0.741 & 
  \mathbf{60}^+, 0.841 & \mathbf{60}^+, 0.924  & \mathbf{60}^+, 0.921
  & \boldsymbol{\infty}, 1.001 & \boldsymbol{\infty}, 1.045 & \boldsymbol{\infty},
  0.959 \\[1ex]
  \mathbf{289} & \mathbf{37}, 0.638& \mathbf{45}, 0.679 & \mathbf{54}, 0.736 &
  \boldsymbol{\infty}, 1.014 & \boldsymbol{\infty}, 1.034 & 
  \boldsymbol{\infty}, 1.021& \boldsymbol{\infty}, 1.031 & \boldsymbol{\infty},
  1.098  & \boldsymbol{\infty},0.021 \\[1ex]
  \mathbf{1089} & \mathbf{50}, 0.724 & \mathbf{60}, 0.766& \mathbf{60}^+,
  0.869 &   \boldsymbol{\infty}, 1.036 & 
  \boldsymbol{\infty}, 1.001& \boldsymbol{\infty}, 1.003 & \boldsymbol{\infty},
  1.002  & \boldsymbol{\infty},1.116  & \boldsymbol{\infty},1.055 \\[1ex]
  \mathbf{4225} & \mathbf{60}^+, 0.877 &  \boldsymbol{\infty}, 1.009& 
  \boldsymbol{\infty}, 
  1.021&  \boldsymbol{\infty}, 1.021 & 
  \boldsymbol{\infty}, 1.061& \boldsymbol{\infty}, 1.057 & \boldsymbol{\infty},
  1.120  & \boldsymbol{\infty},1.151  & \boldsymbol{\infty},1.163 \\[1ex]
  \hline
  &&&&\textbf{smooth}&\textbf{number}& =~5~~~~~&&& \\[1ex]
  \hline
  \mathbf{81}& \mathbf{38}, 0.652& \mathbf{45}, 0.694 & \mathbf{54}, 0.741 & 
  \mathbf{60}^+, 0.841 & \mathbf{60}^+, 0.924  & \mathbf{60}^+, 0.921
  & \boldsymbol{\infty}, 1.001 & \boldsymbol{\infty}, 1.045 & \boldsymbol{\infty},
  0.959 \\[1ex]
  \mathbf{289}  & \mathbf{13}, 0.242 & \mathbf{13}, 0.282 & \mathbf{17}, 0.355 & 
  \mathbf{26}, 0.526 & \mathbf{30}, 0.583 & \mathbf{43},
  0.686  & \mathbf{60}^+, 0.830 &  \boldsymbol{\infty}, 1.027 & \boldsymbol{\infty},
  1.148 \\[1ex]
  \mathbf{1089} & \mathbf{13}, 0.270 & \mathbf{16}, 0.362& \mathbf{20}, 0.407 & 
  \mathbf{31}, 0.585 & \mathbf{46}, 0.691& \mathbf{60}^+,
  0.817  & \boldsymbol{\infty}, 1.025& \boldsymbol{\infty}, 1.005 & \boldsymbol{\infty},
  1.055\\[1ex]
  \mathbf{4225} & \mathbf{34}, 0.621 & \mathbf{41}, 0.668 & \mathbf{51}, 0.726 & 
   \mathbf{60}^+, 0.793& \mathbf{60}^+,
  0.906 & \boldsymbol{\infty}, 1.095 & \boldsymbol{\infty}, 1.027 & 
  \boldsymbol{\infty}, 1.081 & \boldsymbol{\infty}, 1.031 \\[1ex]
  \hline
  &&&&\textbf{smooth}&\textbf{number}& =~10~~~~~&&& \\[1ex]
  \hline
  \mathbf{81}& \mathbf{38}, 0.652& \mathbf{45}, 0.694 & \mathbf{54}, 0.741 &
  \mathbf{60}^+, 0.841 & \mathbf{60}^+, 0.924  & \mathbf{60}^+, 0.921
  & \boldsymbol{\infty}, 1.001 & \boldsymbol{\infty}, 1.045 & \boldsymbol{\infty},
  0.959 \\[1ex]
  \mathbf{289}  & \mathbf{9}, 0.111 & \mathbf{10}, 0.135 & \mathbf{11}, 0.207 & 
  \mathbf{14}, 0.307& \mathbf{18}, 0.395& \mathbf{23},
  0.465  & \mathbf{27}, 0.526& \mathbf{43}, 0.680& \mathbf{60}^+, 0.800  \\[1ex]
  \mathbf{1089} & \mathbf{12}, 0.219 & \mathbf{14}, 0.290& \mathbf{17}, 0.380 & 
  \mathbf{21}, 0.460& \mathbf{26}, 0.477& \mathbf{32},
  0.571  & \mathbf{45}, 0.691& \mathbf{60}^+, 0.772& \boldsymbol{\infty}, 
  0.977\\[1ex]
  \mathbf{4225} & \mathbf{31}, 0.593 & \mathbf{36}, 0.632& \mathbf{44}, 0.682 & 
  \mathbf{55}, 0.742& \mathbf{60}^+, 0.814& \mathbf{60}^+,
  0.900 & \boldsymbol{\infty}, 1.038& \boldsymbol{\infty}, 1.125& 
  \boldsymbol{\infty},
  1.061\\[1ex]
  \hline
\end{array}$$
}
\end{table*}

\begin{table*}
{\footnotesize
% table caption is above the table
\caption{MG + Morley +  GS + smooth number 1-5-10}
\label{mmg}      
$$\begin{array}{lrrrrrrrrr}
  \hline \\
  N \backslash m & 10^0 & 10^1 & 10^2 & 10^4 & 10^5 & 10^6 & 10^7 &
  10^8 & 10^9 \\[1ex]
  \hline\hline \\
  \hline
  &&&&\textbf{smooth}&\textbf{number}& =~1~~~~~&&& \\[1ex]
  \hline
  \mathbf{81}& \mathbf{38}, 0.652& \mathbf{45}, 0.694 & \mathbf{54}, 0.741  &
  \mathbf{60}^+, 0.841 & \mathbf{60}^+, 0.924  & \mathbf{60}^+, 0.921
  & \boldsymbol{\infty}, 1.001 & \boldsymbol{\infty}, 1.045 & \boldsymbol{\infty},
  0.959 \\[1ex]
  \mathbf{289}  & \mathbf{52}, 0.724 & \mathbf{60}^+, 0.807 & \boldsymbol{\infty},
  0.955& \boldsymbol{\infty}, 0.989 & 
  \boldsymbol{\infty}, 0.982& \boldsymbol{\infty}, 1.043 & \boldsymbol{\infty},
  0.990  & \boldsymbol{\infty},1.027  & \boldsymbol{\infty},1.020 \\[1ex]
  \mathbf{1089} & \mathbf{60}^+, 0.860 & \mathbf{60}^+, 0.894 & 
  \boldsymbol{\infty}, 0.996& 
  \boldsymbol{\infty}, 0.989 & 
  \boldsymbol{\infty}, 1.047& \boldsymbol{\infty}, 1.091 & \boldsymbol{\infty},
  1.021  & \boldsymbol{\infty},1.036  & \boldsymbol{\infty},1.173 \\[1ex]
  \mathbf{4225}& \boldsymbol{\infty}, 0.972& \boldsymbol{\infty}, 1.011 & 
  \boldsymbol{\infty},
  1.020 & \boldsymbol{\infty}, 1.066 & 
  \boldsymbol{\infty}, 1.058& \boldsymbol{\infty}, 1.129 & \boldsymbol{\infty},
  1.134  & \boldsymbol{\infty},1.145  & \boldsymbol{\infty},1.164 \\[1ex]
  \hline
  &&&&\textbf{smooth}&\textbf{number}& =~5~~~~~&&& \\[1ex]
  \hline
  \mathbf{81}& \mathbf{38}, 0.652& \mathbf{45}, 0.694 & \mathbf{54}, 0.741 &
  \mathbf{60}^+, 0.841 & \mathbf{60}^+, 0.924  & \mathbf{60}^+, 0.921
  & \boldsymbol{\infty}, 1.001 & \boldsymbol{\infty}, 1.045 & \boldsymbol{\infty},
  0.959 \\[1ex]
  \mathbf{289} & \mathbf{14}, 0.243 & \mathbf{16}, 0.284 & \mathbf{18}, 0.332 & 
  \mathbf{31}, 0.547 & \mathbf{38}, 0.646 & \mathbf{60}^+,
  0.826 &  \boldsymbol{\infty}, 1.037 & \boldsymbol{\infty}, 1.082 & 
  \boldsymbol{\infty}, 1.085 \\[1ex]
  \mathbf{1089} & \mathbf{16}, 0.364 & \mathbf{21}, 0.441& \mathbf{27}, 0.517 & 
  \mathbf{45}, 0.699 & \mathbf{60}^+, 0.774 &  \boldsymbol{\infty}, 1.014 &  
  \boldsymbol{\infty}, 1.042 &  \boldsymbol{\infty}, 1.020 &\boldsymbol{\infty},
  1.038  \\[1ex]
  \mathbf{4225}& \mathbf{39}, 0.652 & \mathbf{50}, 0.718 & \mathbf{60}^+, 0.765 &
  \mathbf{60}^+, 0.844 & \mathbf{60}^+, 0.942  & \boldsymbol{\infty}, 1.073
  & \boldsymbol{\infty}, 1.092 & \boldsymbol{\infty}, 1.107 & \boldsymbol{\infty},
  01.123 \\[1ex]
  \hline
  &&&&\textbf{smooth}&\textbf{number}& =~10~~~~~&&& \\[1ex]
  \hline
  \mathbf{81}& \mathbf{38}, 0.652& \mathbf{45}, 0.694 & \mathbf{54}, 0.741 & 
  \mathbf{60}^+, 0.841 & \mathbf{60}^+, 0.924  & \mathbf{60}^+, 0.921
  & \boldsymbol{\infty}, 1.001 & \boldsymbol{\infty}, 1.045 & \boldsymbol{\infty},
  0.959 \\[1ex]
  \mathbf{289} & \mathbf{10}, 0.104 & \mathbf{10}, 0.186& \mathbf{12}, 0.246 & 
  \mathbf{16}, 0.274& \mathbf{21}, 0.382& \mathbf{26},
  0.537  & \mathbf{38}, 0.626& \mathbf{45}, 0.697& \mathbf{60}^+, 0.870  \\[1ex]
  \mathbf{1089} & \mathbf{15}, 0.289 & \mathbf{16}, 0.362& \mathbf{20}, 0.405 & 
  \mathbf{25}, 0.522& \mathbf{30}, 0.519& \mathbf{36},
  0.607  & \mathbf{47}, 0.702& \mathbf{60}^+, 0.904 &\boldsymbol{\infty},
  1.009  \\[1ex]
  \mathbf{4225}& \mathbf{36}, 0.635& \mathbf{42}, 0.678 & \mathbf{51}, 0.721 & 
  \mathbf{60}^+, 0.830 & \mathbf{60}^+, 0.919  & \boldsymbol{\infty}, 1.027
  & \boldsymbol{\infty}, 1.048 & \boldsymbol{\infty}, 1.003 & \boldsymbol{\infty},
  01.114 \\[1ex]
  \hline
\end{array}$$
}
\end{table*}
}

As emphasized in our preceding paper~\cite{AGKS:2007}, AGKS can be
used purely as an algebraic preconditioner.  Therefore, the standard
multigrid preconditioner constraint that the coarsest level mesh
resolves the boundary of the island is automatically
eliminated. However, for a fair comparison, we enforce the coarsest
level mesh to have that property.

We do not observe convergence improvement when a subdomain deflation
strategy based on the smallest eigenvalues is used as in the diffusion
equation case~\cite{AkYe2009}.  The eigenvectors of the Neumann matrix,
$e_H$ in \eqref{defn:e_H}, cannot approximate the eigenvectors
corresponding to the smallest eigenvalues of $K_{HH}$ which are of
$\mathcal{O}(1)$ (see Figure \ref{fig:spectra}) since the remainder
matrix $R$ in \eqref{K_HH_decomp} is of $\mathcal{O}(10^4)$.
Therefore, a deflation strategy utilizing $e_H$ will not necessarily
guarantee deflation of the smallest eigenvalues of $K_{HH}$ in the
biharmonic case.

We first observe that the Morley discretization provides faster
convergence for both preconditioners. Then, we have the following
results regarding the effect of number of smoothing iterations on the
convergence behaviour.  The convergence of MG heavily depends on the
number of smoothing iterations, i.e., the more the smoothing
iteration, the faster the convergence.  For the HCT discretization,
AGKS requires more than a single smoothing iteration for convergence;
see Tables \ref{ahs} and \ref{ahg}. However, for the Morley
discretization, even with the same minimal number of smoothing
iteration, AGKS leads to convergence; see Tables \ref{ams} and
\ref{amg}.  The choice of 5 smoothing iterations is sufficient for
AGKS to reach $h$-robustness and its peak performance. Hence, we can
conclude that AGKS clearly enjoys $h$-robustness.  In contrast, MG is
not $h$-robust regardless of the $m$ value and the smoothing number;
see Tables \ref{mhs}, \ref{mhg}, \ref{mms}, and \ref{mmg}.  MG is
totally ineffective as the problem size increases for both
discretizations, and more obviously for HCT.

Finally, we report the $m$-robustness results.  The loss of
$m$-robustness of MG can be observed consistently for all $m$ values;
see Tables \ref{mhs}, \ref{mhg}, \ref{mms}, and \ref{mmg}.  The AGKS
preconditioner becomes more effective with increasing $m$ and reaches
its peak performance by maintaining an optimal iteration count for all
$m \geq 10^5$. This indicates that $m \geq 10^5$ corresponds to the
asymptotic regime.  Even increasing the $m$ value from $10^2$ to
$10^3$ reduces the iteration count significantly, a clear sign of
close proximity to the asymptotic regime.  In addition, the AGKS
outperforms MG even for $m=1$. Consequently, for both discretizations,
we infer that AGKS is $m$-robust.

%\newpage

%\bibliographystyle{siam}
%\bibliography{../../../burakCVS/chem/bib/burak,../../../burakCVS/chem/bib/library,../../../burakCVS/chem/bib/books}   % name your BibTeX data base

\begin{thebibliography}{10}
\bibitem{AkBe2008}
{\sc B.~Aksoylu and H.~R. Beyer}, {\em \href{http://arxiv.org/pdf/0810.3427v1}
  {Results on the diffusion equation with rough coefficients}}.
\newblock Submitted to SIAM J. Math. Anal., 2008.

\bibitem{AkBe2009}
\leavevmode\vrule height 2pt depth -1.6pt width 23pt, {\em On the
  characterization of the asymptotic cases of the diffusion equation with rough
  coefficients and applications to preconditioning}, Numer. Funct. Anal.
  Optim., 30 (2009), pp.~405--420.

\bibitem{AkBoHo03}
{\sc B.~Aksoylu, S.~Bond, and M.~Holst}, {\em
  \href{http://siamdl.aip.org/getpdf/servlet/GetPDFServlet?filetype=pdf&id=SJO%
CE3000025000002000478000001&idtype=cvips}{An Odyssey into Local Refinement and
  Multilevel Preconditioning III: Implementation and Numerical Experiments}},
  SIAM J. Sci. Comput., 25 (2003), pp.~478--498.

\bibitem{AGKS:2007}
{\sc B.~Aksoylu, I.~G. Graham, H.~Klie, and R.~Scheichl}, {\em
  \href{http://www.springerlink.com/content/e9633h7441117836/fulltext.pdf}
  {Towards a rigorously justified algebraic preconditioner for high-contrast
  diffusion problems}}, Comput. Vis. Sci., 11 (2008), pp.~319--331.
\newblock doi:10.1007/s00791-008-0105-1.

\bibitem{AkKl:2007}
{\sc B.~Aksoylu and H.~Klie}, {\em
  \href{http://www.cct.lsu.edu/~burak/pubs/aksoyluKlie2008.pdf} {A family of
  physics-based preconditioners for solving elliptic equations on highly
  heterogeneous media}}, Appl. Num. Math., 59 (2009), pp.~1159--1186.
\newblock doi:10.1016/j.apnum.2008.06.002.

\bibitem{AkYe2009}
{\sc B.~Aksoylu and Z.~Yeter}, {\em Robust multigrid preconditioners for
  cell-centered finite volume discretization of the high-contrast diffusion
  equation}.
\newblock Submitted to Comput. Vis. Sci., 2009.

\bibitem{BaKn1990}
{\sc N.~S. Bakhvalov and A.~V. Knyazev}, {\em A new iterative algorithm for
  solving problems of the fictitious flow method for elliptic equations},
  Soviet Math. Dokl., 41 (1990), pp.~481--485.

\bibitem{Braess.D;Peisker.P1987}
{\sc D.~Braess and P.~Peisker}, {\em A conjugate gradient method and a
  multigrid algorithm for {M}orley's finite element approximation of the
  biharmonic equation}, Numerische Mathematik, 50 (1987), pp.~567--586.

\bibitem{ciarlet2002_book}
{\sc P.~G. Ciarlet}, {\em The Finite Element Method for Elliptic Problems},
  Classics in applied mathematics, SIAM, Philadelphia, PA, 2002.

\bibitem{HCTorigPaper}
{\sc R.~W. Clough and J.~L. Tocher}, {\em Finite element stiffness matrices for
  analysis of plates in bending}, in Proceedings of the conference on matrix
  methods in structural mechanics, 1965.

\bibitem{dang2006}
{\sc Q.~A. Dang}, {\em Iterative method for solving the {N}eumann boundary
  value problem for biharmonic type equation}, J. Comput. Appl. Math, 196
  (2006), pp.~643--643.

\bibitem{GrHa:99}
{\sc I.~G. Graham and M.~J. Hagger}, {\em {Unstructured additive
  {Schwarz}-conjugate gradient method for elliptic problems with highly
  discontinuous coefficients}}, SIAM J. Sci. Comp., 20 (1999), pp.~2041--2066.

\bibitem{grossi2001}
{\sc R.~O. Grossi}, {\em On the existence of weak solutions in the study of
  anisotropic plates}, J. Sound Vibration, 242 (2001), pp.~542--552.

\bibitem{Hanisch.M1993}
{\sc M.~R. Hanisch}, {\em Multigrid preconditioning for the biharmonic
  {D}irichlet problem}, SIAM J. Numer. Anal., 30 (1993),
  pp.~184--214.

\bibitem{kato1982_book}
{\sc T.~Kato}, {\em A short introduction to perturbation theory for linear
  operators}, Springer, Berlin, Germany, 1982.

\bibitem{KnWi:2003}
{\sc A.~Knyazev and O.~Widlund}, {\em {Lavrentiev regularization + Ritz
  approximation = uniform finite element error estimates for differential
  equations with rough coefficients}}, Math. Comp., 72 (2003), pp.~17--40.

\bibitem{Maes.J;Bultheel.A2006}
{\sc J.~Maes and A.~Bultheel}, {\em A hierarchical basis preconditioner for the
  biharmonic equation on the sphere}, IMA J. Numer. Anal., 26 (2006),
  pp.~563--583.

\bibitem{manolisRangelovShaw2003}
{\sc G.~D. Manolis, T.~V. Rangelov, and R.~P. Shaw}, {\em The non-homogeneous
  biharmonic plate equation: fundamental solutions}, Internat. J. Solids
  Structures, 40 (2003), pp.~5753--5767.

\bibitem{marcinkowski2007}
{\sc L.~Marcinkowski}, {\em An additive {S}chwarz method for mortar finite
  element discretizations of the 4th order elliptic problem in {2D}}, Electron.
  Trans. Numer. Anal., 26 (2007), pp.~34--54.

\bibitem{mayo1984}
{\sc A.~Mayo}, {\em The fast solution of {Poisson's} and the biharmonic
  equations on irregular regions}, SIAM J.\ Numer.\ Anal., 21 (1984),
  pp.~285--299.

\bibitem{mayoGreenbaum1992}
{\sc A.~Mayo and A.~Greenbaum}, {\em Fast parallel iterative solution of
  {Poisson's} and the biharmonic equations on irregular regions}, SIAM J.\
  Sci.\ Statist.\ Comput., 13 (1992), pp.~101--118.

\bibitem{Mihajlovic.M;Silvester.D2004a}
{\sc M.~Mihajlovic and D.~Silvester}, {\em {A black-box multigrid
  preconditioner for the biharmonic equation}}, BIT, 44 (2004), pp.~151--163.

\bibitem{Mihajlovic.M;Silvester.D2004}
{\sc M.~Mihajlovi{\'c} and D.~Silvester}, {\em {Efficient parallel solvers for
  the biharmonic equation}}, Parallel Computing, 30 (2004), pp.~35--55.

\bibitem{millerHorgan1995}
{\sc K.~L. Miller and C.~O. Horgan}, {\em End effects for plane deformations of
  an elastic anisotropic semi-infinite strip}, J. Elasticity, 38 (1995),
  pp.~261--316.

\bibitem{morleyOrigPaper}
{\sc L.~S.~D. Morley}, {\em The triangular equilibrium problem in the solution
  for plate bending problems}, Aero. Quart., 19 (1968), pp.~149--169.

\bibitem{Oswald.P1992}
{\sc P.~Oswald}, {\em Hierarchical conforming finite element methods for the
  biharmonic equation}, SIAM J. Numerical Analysis, 29 (1992), pp.~1610--1625.

\bibitem{Oswald.P1995}
\leavevmode\vrule height 2pt depth -1.6pt width 23pt, {\em Multilevel
  preconditioners for discretizations of the biharmonic equation by rectangular
  finite elements}, Numer. Lin. Alg. Appl., 2 (1995), pp.~487--505.

\bibitem{pozrikidis2005_book}
{\sc C.~Pozrikidis}, {\em Introduction to Finite and Spectral Element Methods
  using {MATLAB}}, Chapman \& Hall/CRC, Boca Raton, FL, 2005.

\bibitem{wang2005_masterThesis}
{\sc T.~S. Wang}, {\em A {Hermite} cubic immersed finite element space for beam
  design problems}.
\newblock Master thesis, Department of Mathematics, Virginia Polytechnic
  Institute and State University, 2005.

\bibitem{watkins2002_book}
{\sc D.~S. Watkins}, {\em Fundamentals of Matrix Computations},
  Wiley-Interscience; second edition, New York, 2002.

\bibitem{Zhang.X1994}
{\sc X.~Zhang}, {\em Multilevel {S}chwarz methods for the biharmonic Dirichlet
  problem}, SIAM J. Sci. Comput., 15 (1994), pp.~621--644.

\end{thebibliography}

%{\small 

%}
\end{document}